%
%
\documentclass[10pt]{amsart}
\usepackage{latexsym, amsmath, amsfonts, amssymb, mathrsfs, fancyhdr, tikz, tikz-cd}
\usepackage{verbatim}
\usepackage{multicol}
\usetikzlibrary{matrix,arrows,decorations.pathmorphing,calc}


\def\nin{\not \in}

\def\R{\mathbb{R}}

\def\s{\sigma}

\def\d{\delta}

\def\l{\lambda}

\def\V{\mathcal{V}}
\def\S{\mathcal{S}}

\def\a{\alpha}
\def\b{\beta}

\def\g{\gamma}

\def\la{\langle}
\def\ra{\rangle}

\def\ch2{\mathbb{C} \mathbb{H}^2}
\def\h2{\mathbb{H}^2}
\def\ddt{\frac{\partial}{\partial \theta}}
\def\ddr{\frac{\partial}{\partial r}}
\def\th{\theta}

\def\oshr/2{\cosh \left( \frac{r}{2} \right)}
\def\inhr/2{\sinh \left( \frac{r}{2} \right)}
\def\osh2r/2{\cosh^2 \left( \frac{r}{2} \right)}
\def\inh2r/2{\sinh^2 \left( \frac{r}{2} \right)}
\def\-1/4{- \frac{1}{4}}
\def\H{\mathbb{H}}
\def\C{\mathbb{C}}
\def\S{\mathbb{S}}
\def\P{\mathbb{P}}
\def\dr{\partial r}

\def\dds{\frac{\partial}{\partial \psi}}

\newtheorem{theorem}{Theorem}[section]
\newtheorem{lemma}[theorem]{Lemma}

\theoremstyle{definition}

\theoremstyle{remark}
\newtheorem{remark}[theorem]{Remark}

\numberwithin{equation}{section}



\begin{document}

\title{Warped product metrics on hyperbolic and complex hyperbolic manifolds}

\author{Barry Minemyer}
\address{Department of Mathematical and Digital Sciences, Commonwealth University - Bloomsburg campus, Bloomsburg, Pennsylvania 17815}
\email{bminemyer@commonwealthu.edu}


\subjclass[2010]{Primary 53C20, 53C35; Secondary 53C56, 57R25}

\date{\today.}


\keywords{complex hyperbolic space, hyperbolic space, totally geodesic submanifold, warped product metric, sectional curvature}

\begin{abstract}
In this paper we study warped-product metrics on manifolds of the form $X \setminus Y$, where $X$ denotes either $\H^n$ or $\C \H^n$, and $Y$ is a totally geodesic submanifold with arbitrary codimension.  
The main results that we prove are curvature formulas for these metrics on $X \setminus Y$ expressed in spherical coordinates about $Y$.  
We also discuss past and potential future applications of these formulas.
\end{abstract}

\maketitle



\section{Introduction}\label{Section:Introduction}

\subsection{Main results}
Let $\H^n$ denote (real) $n$-dimensional hyperbolic space and let $\C \H^n$ denote (complex) $n$-dimensional complex hyperbolic space.  
In this paper, $X$ will denote either $\H^n$ or $\C \H^n$, and $Y$ will denote a totally geodesic submanifold of $X$.  
So if $X = \H^n$ then $Y = \H^k$, and if $X = \C \H^n$ then $Y$ is either $\H^k$ or $\C \H^k$ for some $0 \leq k \leq n-1$.  
Let $M$ be a Riemannian manifold, and $N$ a totally geodesic submanifold of $M$.  
We say that the pair $(M, N)$ is {\it modeled on} $(X, Y)$ if there exist lattices $\Gamma \subset \text{Isom}(X)$ and $\Lambda \subset \text{Isom}(Y)$ such that $M = X / \Gamma$, $N = Y / \Lambda$, and $\Lambda < \Gamma$.  
We also allow for the possibility that $N$ is disconnected.  
That is, we allow for multiple lattices $\Lambda < \Gamma$ which correspond to different (disjoint) copies of $\H^k \subset \H^n$ or $\H^k, \C \H^k \subset \C \H^n$.  

The purpose of this paper is to develop curvature formulas for warped-product metrics on $X \setminus Y$ when the pair $(X, Y)$ is one of $(\H^n, \H^k)$, $(\C \H^n, \H^n)$, or $(\C \H^n, \C \H^k)$.  
These cases are detailed in Sections \ref{section:hn/hk}, \ref{section:chn/hn}, and \ref{section:chn/chk}, respectively.  
In each case we write the metric on $X$ in spherical coordinates about $Y$ (Theorems \ref{thm:hn/hk metric}, \ref{thm:chn/hn metric}, and \ref{thm:chn/chk metric}), we consider the corresponding warped product metric where we allow for variable coefficients in the metric tensor (equations \eqref{eqn:warped metric 1}, \eqref{eqn:warped metric 2}, and \eqref{eqn:/g}), and we compute formulas for the components of the $(4,0)$ curvature tensor with respect to these coefficient functions (Theorems \ref{thm:curvature tensor 0}, \ref{thm:curvature tensor 1}, and \ref{thm:curvature tensor 2}).  
These last three Theorems should be considered the main results of this paper.

\subsection{Applications for these curvature formulas}

Specific cases for these formulas are already known and have been used in various applications in the literature.  
Some examples are as follows.  
The case when $X = \H^n$ and $Y = \H^{n-2}$ was used by Gromov and Thurston in \cite{GT} (discussed further below) and by Belegradek in \cite{Belegradek real}.  
When $X = \H^n$ and $Y = \H^0$ is a point, this leads to the basis for the {\it Farrell and Jones Warping Deformation} used in \cite{FJ2}.  
This process is described by Ontaneda in \cite{Ontaneda 1} and used by the same author in \cite{Ontaneda 2}.  
The case when $X = \C \H^n$ and $Y = \C \H^0$ is a point was used by Farrell and Jones in \cite{FJ}, and the same $X$ but with $Y = \C \H^{n-1}$ was considered by Belegradek in \cite{Belegradek complex}.  
Finally, the cases when $(X,Y)$ are either $(\C \H^2, \H^2)$ or $(\C \H^n, \C \H^{n-2})$ were used by the author in \cite{Min} and \cite{Min negative distribution}.  

While the author believes that the curvature formulas in Theorems \ref{thm:curvature tensor 0}, \ref{thm:curvature tensor 1}, and \ref{thm:curvature tensor 2} will have many future uses, the primary motivation for the development of these curvature formulas was for the following application.

In \cite{GT} Gromov and Thurston famously construct pinched negatively curved manifolds which do not admit hyperbolic metrics.  
In this construction they consider pairs $(M, N)$ modeled on $(\H^n, \H^{n-2})$ which satisfy a few special topological and geometric conditions.
The pinched negatively curved manifold $B$ which does not admit a hyperbolic metric is then the $k$-fold cyclic branched cover of $M$ about $N$ (where $k \in \mathbb{N}$ can take all but possibly finitely many values).  
The difficulty in all of this is showing that $B$ exists, constructing a pinched negatively curved metric on $B$, and proving that $B$ does not admit a hyperbolic metric.

It is an open question as to whether or not this construction can be extended to the locally symmetric pairs $(\C \H^n, \C \H^{n-1})$ and $(\C \H^2, \H^2)$.
In forthcoming paper (\cite{Min complex metric}) the author shows that the $d$-fold cyclic ramified cover of $M$ about $N$ for the case $(\C \H^n, \C \H^{n-1})$ {\it does} admit an almost negatively $\frac{1}{4}$-pinched Riemannian metric.
The fact that such a pair $(M,N)$ can be realized so that the ramified cover is a smooth manifold for some integer $d > 2$ is a result of Stover and Toledo in \cite{ST}.  
The constructions of these Riemannian metrics are dependent on the curvature formulas proved in Theorems \ref{thm:curvature tensor 1} and \ref{thm:curvature tensor 2} below.


One last remark about these curvature formulas.  
In \cite{Belegradek real}, \cite{Belegradek complex}, and \cite{Min} it is proved that the manifold $M \setminus N$, where $(M, N)$ is modeled on one of $(\H^n, \H^{n-2})$, $(\C \H^n, \C \H^{n-2})$, or $(\C \H^2, \H^2)$, admits a complete, finite volume, negatively curved Riemannian metric.  
The curvature formulas developed in this paper generalize the curvature formulas computed and used in these three articles.

\subsection{Obstructions to $M \setminus N$ admitting a complete, finite volume Riemannian metric of negative sectional curvature} 
Consider the finite volume manifold $M \setminus N$.
The three cases where $N$ has real codimension two in $M$ are modeled on one of $(\H^n, \H^{n-2})$, $(\C \H^n, \C \H^{n-1})$, or $(\C \H^2, \H^2)$.  
In all of these cases, the manifold $M \setminus N$ admits a complete, finite volume Riemannian metric whose sectional curvature is bounded above by a negative constant (\cite{Belegradek real}, \cite{Belegradek complex}, and \cite{Min}).  

When the real codimension of $N$ is greater than two, the manifold $M \setminus N$ should not admit a complete, finite volume, negatively curved metric because it generally will not be aspherical.  
This fact should be realized in the curvature equations in Theorems \ref{thm:curvature tensor 0}, \ref{thm:curvature tensor 1}, and \ref{thm:curvature tensor 2}.  
More specifically, there should be an equation(s) which obstructs such a metric, but this (these) curvature equations should vanish when $N$ has codimension two.

In all cases except one ``exceptional case" the obstruction is a sectional curvature equation of the form
	\begin{equation}\label{eqn:failure}
	\frac{1}{v^2} - \left( \frac{v'}{v} \right)^2
	\end{equation}
where $v: \R \to \R$ is a positive, increasing real-valued function.  
In order to alter the metric on $M \setminus N$ to be complete, one needs to define a warping function for $v$ that will make each component of $N$ into the boundary of a cusp of the manifold.  
One easily checks that equation \eqref{eqn:failure} is nonpositive if and only if $1 \leq (v')^2$.  
But for the Riemannian metric to have any chance of having finite volume one needs $\lim_{r \to - \infty} v'(r) = 0$.

The one exceptional case is when $(M, N)$ is modeled on $(\C \H^n, \C \H^{n-2})$.  
Here, all curvature equations of the form \eqref{eqn:failure} vanish, and so this obstruction is more subtle.  
It should be noted that the vanishing of \eqref{eqn:failure} is what leads to the metric developed in \cite{Min negative distribution}.  
A detailed discussion about this situation is given in Subsection \ref{subsection:chn/chn-2}.

\subsection{Layout of this paper}
In Section \ref{section:hn/hk} we study manifolds of the form $\H^n \setminus \H^k$, in Section \ref{section:chn/hn} we consider $\C \H^n \setminus \H^n$, and in Section \ref{section:chn/chk} we analyze $\C \H^n \setminus \C \H^k$.    
The calculations in Section \ref{section:chn/hn} and \ref{section:chn/chk} become very complicated.
So in Section \ref{section:chn/hn} we restrict our attention to $\C \H^3 \setminus \H^3$ and in Section \ref{section:chn/chk} we restrict to $\C \H^5 \setminus \C \H^2$ to make these calculations simpler to follow.  
In each case, these are the smallest choices for $n$ and $k$ which capture all of the different formulas for the curvature tensor, up to the symmetries of the curvature tensor (and with respect to the frames chosen in each Section).
That is, from these cases one knows all of curvature formulas for general $\C \H^n \setminus \H^n$ and $\C \H^n \setminus \C \H^k$.
Also, notice that we only consider $\C \H^n \setminus \H^n$ in Section \ref{section:chn/hn} instead of the more general $\C \H^n \setminus \H^k$.  
The reason for this is due to simplicity:  in general there are several ways that $\H^k$ can sit inside of $\C \H^n$ which requires a case-by-case analysis.  
But in all situations this copy of $\H^k$ is contained in a copy of $\H^n$, and then one can apply our formulas here to $\C \H^n \setminus \H^n$.
Section \ref{section:preliminaries} is a short Section on some known formulas that are referenced throughout the paper, and Section \ref{section:proof of Lie brackets 1} is devoted to computing values for Lie brackets from Section \ref{section:chn/hn}.

We end this Section with the following two remarks which deal with notational differences between this paper and references \cite{Belegradek real}, \cite{Belegradek complex}, and \cite{Min}.

\begin{remark}\label{rmk:curvature bounds}
In this paper we scale the complex hyperbolic metric to have sectional curvatures in the interval $[-4,-1]$, whereas in the previous three references the curvatures were scaled to $\left[ -1,\-1/4 \right]$.  
To adjust the formulas in \cite{Belegradek real}, \cite{Belegradek complex}, and \cite{Min}, one simply multiplies the warping functions $h$, $v$, and $h_r$ by $\frac{1}{2}$.  
With this adjustment (and the following Remark), one sees that the formulas in these references agree with the codimension two versions of the formulas in Theorems \ref{thm:curvature tensor 0}, \ref{thm:curvature tensor 1}, and \ref{thm:curvature tensor 2}.
\end{remark}

\begin{remark}\label{rmk:curvature notation}
Another major notational difference between this paper and the papers \cite{Belegradek complex} and \cite{Belegradek real} is the formula used for the curvature tensor.
Let $g$ be a Riemannian metric with Levi-Civita connection $\nabla$, and let $W, X, Y,$ and $Z$ be vector fields.  
In this paper we follow \cite{do Carmo} and use the notation
	\begin{equation}\label{eqn:curvature tensor notation}
	R(X,Y)Z = \nabla_Y \nabla_X Z - \nabla_X \nabla_Y Z + \nabla_{[X,Y]} Z
	\end{equation}
for the curvature tensor $R$ of $g$.  
The negative of this formula is used in \cite{Belegradek complex} and \cite{Belegradek real}.
So, in particular, the $(4,0)$-curvature tensor $\la R(X,Y)Z,W \ra_g$ in this paper is equivalent to $\la R(X,Y)W,Z \ra_g$ in \cite{Belegradek complex} and \cite{Belegradek real}.  
\end{remark}

\vskip 20pt

\section{Curvature formulas for warped product metrics on $\H^n \setminus \H^k$}\label{section:hn/hk}

\subsection{Expressing the metric in $\H^n$ in spherical coordinates about $\H^k$}
Let us first note that in Subsections 2.1, 3.1, and 4.1 we closely follow the notation and terminology used in \cite{Belegradek complex}.

Let ${\bf h_n}$ denote the hyperbolic metric on $\H^n$.  
Since $\H^k$ is a complete totally geodesic submanifold of the negatively curved manifold $\H^n$, there exists an orthogonal projection map $\pi : \H^n \to \H^k$.  
This map $\pi$ is a fiber bundle whose fibers are totally geodesic $(n-k)$-planes.

For $r > 0$ let $E(r)$ denote the $r$-neighborhood of $\H^k$.  
Then $E(r)$ is a real hypersurface in $\H^n$, and consequently we can decompose ${\bf h_n}$ as
	\begin{equation*}
	{\bf h_n} = ({\bf h_n})_r + dr^2
	\end{equation*}
where $({\bf h_n})_r$ is the induced Riemannian metric on $E(r)$.  
Let $\pi_r : E(r) \to \H^k$ denote the restriction of $\pi$ to $E(r)$.  
Note that $\pi_r$ is an $\S^{n-k-1}$-bundle whose fiber over any point $q \in \H^k$ is the $(n-k-1)$-sphere of radius $r$ in the totally geodesic $(n-k)$-plane $\pi^{-1}(q)$.  
The tangent bundle splits as an orthogonal sum $\mathcal{V}(r) \oplus \mathcal{H}(r)$ where $\mathcal{V}(r)$ is tangent to the sphere $\pi_r^{-1}(q)$ and $\mathcal{H}(r)$ is the orthogonal complement to $\mathcal{V}(r)$.

It is well known (see \cite{Belegradek real} or \cite{GT} when $k=n-2$ and \cite{Ontaneda} for general $k$) that for an appropriate identification of $E(r) \cong \H^k \times \S^{n-k-1}$ the metric $({\bf h_n})_r$ can be written as
	\begin{equation*}
	({\bf h_n})_r = \cosh^2(r) {\bf h_k} + \sinh^2(r) {\bf \sigma_{n-k-1}}
	\end{equation*}
where ${\bf h_k}$ denotes the hyperbolic metric on $\H^k$ and ${\bf \sigma_{n-k-1}}$ denotes the round metric on the unit sphere $\S^{n-k-1}$.  
Note that $({\bf h_n})_r$ restricted to $\mathcal{H}(r)$ is $\cosh^2(r) {\bf h_k}$ and $({\bf h_n})_r$ restricted to $\mathcal{V}(r)$ is $\sinh^2(r) {\bf \sigma_{n-k-1}}$.  
We summarize this in the following Theorem.

\vskip 20pt

\begin{theorem}\label{thm:hn/hk metric}
The hyperbolic manifold $\H^n \setminus \H^k$ can be written as $E \times (0, \infty)$  where $E \cong \H^k \times \mathbb{S}^{n-k-1}$ equipped with the metric
	\begin{equation}\label{eqn:hn/hk metric}
	{\bf h_n} = \cosh^2(r) {\bf h_k} + \sinh^2(r) \s^{n-k-1} + dr^2 .
	\end{equation}
\end{theorem}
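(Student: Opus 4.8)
The plan is to work in the hyperboloid model $\H^n = \{ x \in \R^{n,1} : \la x, x \ra = -1, \ x_0 > 0 \}$, where $\la \cdot, \cdot \ra$ is the Lorentzian form of signature $(n,1)$, and to realize $\H^k$ as the intersection of $\H^n$ with the linear subspace $V = \text{span}(e_0, \ldots, e_k)$. Writing $V^\perp = \text{span}(e_{k+1}, \ldots, e_n)$ for the Lorentz-orthogonal complement (which is spacelike), every point $q \in \H^k$ satisfies $\la q, q \ra = -1$ with $q \in V$, and every unit normal direction to $\H^k$ is a unit vector $\omega$ in the sphere $\mathbb{S}^{n-k-1} \subset V^\perp$. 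First I would define the candidate Fermi coordinate map
	\begin{equation*}
	\Phi \colon \H^k \times \mathbb{S}^{n-k-1} \times (0, \infty) \to \H^n \setminus \H^k, \qquad \Phi(q, \omega, r) = \cosh(r)\, q + \sinh(r)\, \omega,
	\end{equation*}
and check that its image lies on the hyperboloid: since $\la q, q \ra = -1$, $\la \omega, \omega \ra = 1$, and $\la q, \omega \ra = 0$, one computes $\la \Phi, \Phi \ra = -\cosh^2(r) + \sinh^2(r) = -1$. The curves $r \mapsto \Phi(q, \omega, r)$ are precisely the unit-speed geodesics of $\H^n$ issuing orthogonally from $\H^k$.

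Next I would verify that $\Phi$ is a diffeomorphism onto $\H^n \setminus \H^k$. Because $\H^n$ is a complete, simply connected manifold of nonpositive curvature and $\H^k$ is a complete totally geodesic submanifold, the nearest-point projection $\pi \colon \H^n \to \H^k$ is well defined and smooth, and each point $p \in \H^n \setminus \H^k$ is joined to $\pi(p)$ by a unique minimizing geodesic meeting $\H^k$ orthogonally. Reading off $q = \pi(p)$, the distance $r = d(p, \H^k) > 0$, and the initial direction $\omega$ of that geodesic produces a two-sided inverse to $\Phi$, so $\Phi$ is a bijection; smoothness of $\Phi$ and of its inverse, together with the fact that the normal exponential map has full rank here (no focal points in nonpositive curvature), gives the diffeomorphism. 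This also furnishes the identification $E \cong \H^k \times \mathbb{S}^{n-k-1}$ that is used uniformly in $r$, which is the content of the phrase ``for an appropriate identification.''

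It then remains to pull back $\mathbf{h_n}$, i.e. the restriction of $\la \cdot, \cdot \ra$ to the hyperboloid, through $\Phi$. Differentiating, a tangent vector to $\H^k \times \mathbb{S}^{n-k-1} \times (0, \infty)$ with components $v \in T_q \H^k$, $w \in T_\omega \mathbb{S}^{n-k-1}$, and $\partial_r$ maps to
	\begin{equation*}
	d\Phi(v, w, \partial_r) = \cosh(r)\, v + \sinh(r)\, w + \big( \sinh(r)\, q + \cosh(r)\, \omega \big).
	\end{equation*}
The decisive structural point is that the three pieces live in mutually $\la \cdot, \cdot \ra$-orthogonal places: $v \in V$, $w \in V^\perp$, and the radial vector pairs trivially against both (using $\la q, v \ra = \la \omega, v \ra = \la q, w \ra = \la \omega, w \ra = 0$). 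Hence all cross terms vanish, and the diagonal blocks give $\la \partial_r \Phi, \partial_r \Phi \ra = \cosh^2(r) - \sinh^2(r) = 1$, then $\cosh^2(r) \la v, v \ra = \cosh^2(r)\, \mathbf{h_k}(v, v)$, and $\sinh^2(r) \la w, w \ra = \sinh^2(r)\, \s^{n-k-1}(w, w)$, where $\la v, v \ra$ is the induced hyperbolic metric on $\H^k$ and $\la w, w \ra$ is the round unit-sphere metric on $\mathbb{S}^{n-k-1}$. Assembling these yields \eqref{eqn:hn/hk metric}.

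The one step that genuinely requires care, rather than bookkeeping, is establishing that $\Phi$ is a global diffeomorphism onto $\H^n \setminus \H^k$: the metric identity itself is a short Lorentzian computation once the chart is fixed, but the global validity of the Fermi chart depends essentially on the nonpositive curvature of $\H^n$ and on $\H^k$ being complete and totally geodesic, so that the normal exponential map has no focal points and the nearest-point projection is single-valued. I would therefore spend the bulk of the argument there, and treat the curvature references (\cite{Belegradek real}, \cite{GT}, \cite{Ontaneda}) as confirming that the resulting coordinates are exactly the classical Fermi coordinates about a totally geodesic submanifold.
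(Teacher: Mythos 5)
Your proof is correct, but it follows a genuinely different route from the paper, because the paper does not carry out this computation at all: after setting up the orthogonal projection $\pi \colon \H^n \to \H^k$, the hypersurfaces $E(r)$, and the splitting $\mathcal{V}(r) \oplus \mathcal{H}(r)$, it quotes the identity $({\bf h_n})_r = \cosh^2(r) {\bf h_k} + \sinh^2(r) \s^{n-k-1}$ as well known, citing \cite{Belegradek real}, \cite{GT} (for $k=n-2$), and a paper of Ontaneda for general $k$. Your hyperboloid-model argument replaces that citation with a short, self-contained computation, and it is sound; the point that makes it work, which you state but could flag more explicitly, is that the Lorentz-orthogonal complement $V^\perp$ is the normal space to $\H^k$ at \emph{every} $q \in \H^k$ (both are $(n-k)$-dimensional, and $V^\perp \subset q^\perp$ is orthogonal to $T_q\H^k = q^\perp \cap V$), so the normal bundle is canonically the product $\H^k \times V^\perp$ and $\Phi(q,\omega,r) = \cosh(r)\,q + \sinh(r)\,\omega$ is a single globally defined Fermi chart; the three orthogonality relations then give \eqref{eqn:hn/hk metric} in two lines. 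One simplification: the Hadamard-manifold argument you say deserves the bulk of the effort (nearest-point projection, absence of focal points) can be bypassed entirely by linear algebra, since for $p \in \H^n \setminus \H^k$ the decomposition $p = p_V + p_{V^\perp}$ forces $\la p_{V^\perp}, p_{V^\perp} \ra = \sinh^2(r)$ for a unique $r>0$ and $\la p_V, p_V \ra = -\cosh^2(r)$, whence $q = p_V/\cosh(r)$ and $\omega = p_{V^\perp}/\sinh(r)$ are recovered uniquely and smoothly; this exhibits $\Phi$ directly as a smooth bijection with smooth inverse. What the paper's formulation buys instead is a structural setup (projection, tube, vertical/horizontal splitting, geodesic-flow identification) that it reuses nearly verbatim in Sections 3 and 4 for $\C\H^n \setminus \H^n$ and $\C\H^n \setminus \C\H^k$, where explicit model computations would be far messier; what yours buys is a complete, elementary proof in the real hyperbolic case that is independent of the literature.
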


\vskip 10pt

\subsection{The warped product metric and curvature formulas}
For some positive, increasing real-valued functions $h, v : (0,\infty) \to \R$ define
	\begin{equation}\label{eqn:warped metric 1}
	{\bf \l_r} := h^2(r) {\bf h_k} + v^2(r) {\bf \s_{n-k-1}}  \hskip 30pt \text{and} \hskip 30pt {\bf \l} := {\bf \l_r} + dr^2.
	\end{equation}
Of course, ${\bf \l} = {\bf h_n}$ when $h = \cosh(r)$ and $v = \sinh(r)$.  

Fix $p \in E(r)$ for some $r$ and let $q = \pi(p) \in \H^k$.  
Let $\{ \check{X}_i \}_{i=1}^k$ be an orthonormal frame of $\H^k$ near $q$ which satisfies $[\check{X}_i,\check{X}_j]_q = 0$ for all $1 \leq i, j \leq k$.
These vector fields can be extended to a collection of orthogonal vector fields $\{ X_i \}_{i=1}^k$ in a neighborhood of $p$ via the inclusion $\H^k \to E \times (0,\infty)$.  
Analogously, define an orthonormal frame $\{ \check{X}_j \}_{j=k+1}^{n-1}$ of $\S^{n-k-1}$ near (the projection of) $p$ which satisfies $[\check{X}_i,\check{X}_j]_p = 0$ for all $k+1 \leq i, j \leq n-1$, and extend this frame to vector fields $\{ X_j \}_{j=k+1}^{n-1}$ in a neighborhood of $p$ via the inclusion $\S^{n-k-1} \to E \times (0,\infty)$.  
Lastly, let $X_n = \ddr$.  

The orthogonal collection of vector fields $\{ X_i \}_{i=1}^n$ satisfies the following:
	\begin{enumerate}
	\item  $\la X_i,X_i \ra_\l = h^2$ for $1 \leq i \leq k$.
	\item  $\la X_i, X_i \ra_\l = v^2$ for $k+1 \leq i \leq n-1$.  
	\item  $\la X_n, X_n \ra_\l = 1$.
	\item  $[X_i,X_j]_p = 0$ for all $i, j$.
	\end{enumerate}
It should be noted that property (4) is special to the real hyperbolic case and will not be true in Sections \ref{section:chn/hn} and \ref{section:chn/chk} below.

Now define the corresponding orthonormal frame near $p$ by $Y_i = \frac{1}{h} X_i$ for $1 \leq i \leq k$, $Y_j = \frac{1}{v} X_j$ for $k+1 \leq j \leq n-1$, and $Y_n = X_n$.  
This frame satisfies the property that $[Y_i,Y_j]_p = 0$ for $1 \leq i, j \leq n-1$.
We can then apply formulas \eqref{eqn:Bel 1} through \eqref{eqn:Bel 4} to write the (4,0) curvature tensor $R_\l$ in terms of $R_{\l_r}$ as follows, where $1 \leq a, b \leq k$ and $k+1 \leq c, d \leq n-1$.  
	\begin{align*}
	&K_\l (Y_a,Y_b) = K_{\l_r}(Y_a,Y_b) - \left( \frac{h'}{h} \right)^2    \hskip 30pt  K_\l (Y_c,Y_d) = K_{\l_r}(Y_c,Y_d) - \left( \frac{v'}{v} \right)^2   \\
	&K_\l (Y_a,Y_c) = K_{\l_r}(Y_a,Y_c) - \frac{h'v'}{hv}    \hskip 20pt  K_\l (Y_a,Y_n) = - \frac{h''}{h}  \hskip 20pt   K_\l(Y_c,Y_n) = - \frac{v''}{v}.
	\end{align*}
In the above equations, we use the notation 
	$$ K(X, Y) = \la R(X, Y)X, Y \ra $$
to denote the sectional curvature of the 2-plane spanned by $X$ and $Y$.
The above equations are the only terms that appear (up to the symmetries of the curvature tensor).  
So, in particular, all mixed terms of $R_\l$ are identically zero.

Now, the $(4,0)$ curvature tensor $R_{\l_r}$ is simple to calculate.  
Since both $h(r) \H^k$ and $v(r) \S^{n-k-1}$ have constant curvature, and $h(r) \H^k \times v(r) \S^{n-k-1}$ is metrically a product, we have that for $1 \leq a, b, \leq k$ and $k+1 \leq c, d \leq n-1$:
	\begin{align*}
	K_{\l_r} (Y_a,Y_b) = - \frac{1}{h^2} \hskip 30pt K_{\l_r}(Y_c,Y_d) = \frac{1}{v^2} \hskip 30pt K_{\l_r}(Y_a,Y_c) = 0.
	\end{align*}
Putting this all together yields the following.

\vskip 20pt

\begin{theorem}\label{thm:curvature tensor 0}
Up to the symmetries of the curvature tensor, the only nonzero terms of the $(4,0)$ curvature tensor $R_\l$ are:
	\begin{align*}
	&K_\l (Y_a,Y_b) = - \frac{1}{h^2} - \left( \frac{h'}{h} \right)^2    \hskip 40pt  K_\l (Y_c,Y_d) = \frac{1}{v^2} - \left( \frac{v'}{v} \right)^2   \\
	&K_\l (Y_a,Y_c) = - \frac{h'v'}{hv}    \hskip 20pt  K_\l (Y_a,Y_n) = - \frac{h''}{h}  \hskip 20pt   K_\l(Y_c,Y_n) = - \frac{v''}{v}
	\end{align*}
where $1 \leq a, b, \leq k$ and $k+1 \leq c, d \leq n-1$.
\end{theorem}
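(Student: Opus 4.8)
The plan is to exploit the decomposition $\l = \l_r + dr^2$ recorded in \eqref{eqn:warped metric 1} and to treat each hypersurface $E(r)$ as a level set of the radial function, so that the sectional curvatures of $\l$ split into an \emph{intrinsic} part, namely the sectional curvatures of the slice metric $\l_r$, and an \emph{extrinsic} part governed by the radial derivatives of the warping functions $h$ and $v$. This bookkeeping is exactly what the warped-product transformation formulas \eqref{eqn:Bel 1}--\eqref{eqn:Bel 4} supply, so the theorem reduces to two tasks: (i) checking that those formulas apply verbatim to the frame $\{Y_i\}$, and (ii) computing the intrinsic tensor $R_{\l_r}$ directly.

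First I would fix $p \in E(r)$ and carry out the frame construction already set up above: form the orthonormal frame with $Y_i = \frac{1}{h} X_i$ on the $\H^k$-directions ($1 \le i \le k$), $Y_j = \frac{1}{v} X_j$ on the $\S^{n-k-1}$-directions ($k+1 \le j \le n-1$), and $Y_n = \ddr$. The decisive structural input is that this frame satisfies $[Y_i, Y_j]_p = 0$ for all $1 \le i, j \le n-1$; this is where property (4) of the $X_i$, special to the real hyperbolic case, is used, and it is what allows \eqref{eqn:Bel 1}--\eqref{eqn:Bel 4} to be invoked with no additional Lie-bracket corrections. Feeding in the warping factor $h$ on the base directions (indices $a,b$) and $v$ on the fiber directions (indices $c,d$) then produces the extrinsic corrections $-(h'/h)^2$, $-(v'/v)^2$, $-h'v'/(hv)$, together with the purely radial curvatures $-h''/h$ and $-v''/v$, and at the same time forces every mixed component of $R_\l$ to vanish.

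The remaining ingredient is the intrinsic curvature $R_{\l_r}$. Here I would observe that, for each fixed $r$, the slice metric $\l_r = h^2 {\bf h_k} + v^2 {\bf \s_{n-k-1}}$ is precisely the Riemannian product of the rescaled space forms $h(r)\,\H^k$ and $v(r)\,\S^{n-k-1}$. Since rescaling a constant-curvature metric so that lengths are multiplied by a constant $c$ divides its sectional curvature by $c^2$, the hyperbolic factor contributes $-1/h^2$ and the spherical factor contributes $+1/v^2$, while the product structure kills all mixed sectional curvatures. This gives $K_{\l_r}(Y_a,Y_b) = -1/h^2$, $K_{\l_r}(Y_c,Y_d) = 1/v^2$, and $K_{\l_r}(Y_a,Y_c) = 0$.

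Combining the two computations yields Theorem \ref{thm:curvature tensor 0} by direct substitution. I expect the main obstacle to be not the final arithmetic but the careful verification at step (i): confirming that the frame genuinely has vanishing brackets at $p$, so that \eqref{eqn:Bel 1}--\eqref{eqn:Bel 4} apply without correction, and tracking which warping function ($h$ or $v$) is attached to each index block so that each correction term is assigned to the correct sectional curvature. The real hyperbolic case is the most forgiving of the three treated in the paper precisely because property (4) holds here and removes the bracket terms that will complicate Sections \ref{section:chn/hn} and \ref{section:chn/chk}.
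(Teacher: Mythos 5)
Your proposal is correct and follows essentially the same route as the paper: apply the warped-product formulas \eqref{eqn:Bel 1}--\eqref{eqn:Bel 4} to the orthonormal frame $\{Y_i\}$, using the vanishing brackets $[Y_i,Y_j]_p=0$ (property (4), special to the real hyperbolic case) to eliminate all correction and mixed terms, and then compute $R_{\l_r}$ from the fact that the slice is a Riemannian product of the rescaled space forms $h(r)\,\H^k$ and $v(r)\,\S^{n-k-1}$ with curvatures $-1/h^2$ and $1/v^2$. This matches the paper's proof step for step.
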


\vskip 10pt

One easily checks that plugging in the values $v(r) = \sinh(r)$ and $h(r) = \cosh(r)$ gives all sectional curvatures of $-1$.

\vskip 20pt

\section{Curvature formulas for warped product metrics on $\C \H^n \setminus \H^n$}\label{section:chn/hn}
As mentioned in the Introduction, for simplicity we are going to restrict ourselves to the case when $n=3$.  
This is exactly the smallest dimension which captures every nonzero component of the curvature tensor, and so nothing is lost with this restriction (see the comments after Theorem \ref{thm:curvature tensor 1} for more discussion).

\subsection{Expressing the metric in $\C \H^3$ in spherical coordinates about $\H^3$}
Let ${\bf c_3}$ denote the complex hyperbolic metric on $\C \H^3$ normalized to have constant holomorphic sectional curvature $-4$.  
Since $\H^3$ is a complete totally geodesic submanifold of the negatively curved manifold $\C \H^3$, there exists an orthogonal projection map $\pi : \C \H^3 \to \H^3$.  
This map $\pi$ is a fiber bundle whose fibers are totally real totally geodesic $3$-planes, and therefore have constant sectional curvature $-1$.  

For $r > 0$ let $E(r)$ denote the $r$-neighborhood of $\H^3$.  
Then $E(r)$ is a real hypersurface in $\C \H^3$, and consequently we can decompose ${\bf c_3}$ as
	\begin{equation*}
	{\bf c_3} = ({\bf c_3})_r + dr^2
	\end{equation*}
where $({\bf c_3})_r$ is the induced Riemannian metric on $E(r)$.  
Let $\pi_r : E(r) \to \H^3$ denote the restriction of $\pi$ to $E(r)$.  
Note that $\pi_r$ is an $\S^2$-bundle whose fiber over any point $q \in \H^3$ is the 2-sphere of radius $r$ in the totally real totally geodesic $3$-plane $\pi^{-1}(q)$.  
The tangent bundle splits as an orthogonal sum $\mathcal{V}(r) \oplus \mathcal{H}(r)$ where $\mathcal{V}(r)$ is tangent to the 2-sphere $\pi_r^{-1}(q)$ and $\mathcal{H}(r)$ is the orthogonal complement to $\mathcal{V}(r)$.

For $r, s > 0$ there exists a diffeomorphism $\phi_{sr}: E(s) \to E(r)$ induced by the geodesic flow along the totally real totally geodesic $3$-planes orthogonal to $\H^3$.   
Fix $p \in E(r)$ arbitrary, let $q = \pi(p) \in \H^3$, and let $\g$ be the unit speed geodesic such that $\g(0) = q$ and $\g(r) = p$. 
In what follows, all computations are considered in the tangent space $T_p E(r)$. 

Note that $\V(r)$ is tangent to both $E(r)$ and the totally real totally geodesic $3$-plane $\pi^{-1}(q)$.  
Then since $\pi^{-1}(q)$ is preserved by the geodesic flow, we have that $d \phi_{sr}$ takes $\V(s)$ to $\V(r)$.  
Since $exp_p^{-1} \left( \pi^{-1}(q) \right)$ is a totally real 3-plane, there exists a suitable identification $\pi^{-1}(q) \cong \S^2 \times (0,\infty)$ where the metric ${\bf c_3}$ restricted to $\pi^{-1}(q)$ can be written as 
	\begin{equation*}
	\sinh^2(r) \s^2 + dr^2.
	\end{equation*}
Here, $\s^2$ is the round metric on the unit 2-sphere.

Let 
	\begin{equation}\label{eqn:X4 and X5 check}
	\check{X}_4 = \ddt	\hskip 40pt	\check{X}_5 = \frac{1}{\sin \th} \dds
	\end{equation}
be an orthonormal frame on a neighborhood of (the projection of) $p$ in $\S^2$, and extend these to orthogonal vector fields $\{ X_4, X_5 \}$ on $\pi^{-1}(q)$ via the inclusion $\S^2 \to \pi^{-1}(q)$.  
 Note that both $X_4$ and $X_5$ are invariant under $d \phi_{sr}$.  
 Let $X_6 = \ddr$.

Let $J$ denote the complex structure on $\C \H^3$.  
It is well known that $J_p$ preserves complex subspaces in $T_p \C \H^3$ and maps real subspaces into their orthogonal complement.
Since $\left( X_4, X_5, X_6 \right)$ spans a real $3$-plane in $T_p \C \H^3$, its orthogonal complement $\mathcal{H}_p(r)$ is spanned by $\left( JX_4, JX_5, JX_6 \right)$.  
In what follows we define vector fields $X_1$, $X_2$, and $X_3$ which are just scaled copies of $JX_4$, $JX_5$, and $JX_6$, respectively.

\subsubsection{The vector fields $X_1$ and $X_2$}
First note that $\left( JX_4, X_6 \right)$ spans a real $2$-plane in $T_p \C \H^3$ (since its $J$-image is contained in its orthogonal complement).  
So $P = \text{exp} \left( \text{span} \left( JX_4, X_6 \right) \right)$ is a totally real totally geodesic $2$-plane in $\C \H^3$ which intersects $\H^3$ orthogonally.
Since this intersection is orthogonal, $P$ is preserved by the geodesic flow $\phi$.  
Therefore, $\text{span} \left( JX_4 \right)$ is preserved by $d \phi$.  

The set $P \cap \H^3$ is a (real) geodesic.  
Let $\a(s)$ denote this geodesic parameterized with respect to arc length so that $\a(0) = q$.  
Then define $(X_1)_p = (d \pi)^{-1}_p \a'(0)$.  
There exists a positive real-valued function $a(r,s)$ so that the metric ${\bf c_3}$ restricted to $P$ is of the form $dr^2 + a^2(r,s) ds^2$.  
But since $\R$ acts by isometries on $P$ via translation along $\a$, the function $a(r,s)$ is independent of $s$.
Then since the curvature of a real $2$-plane is $-1$, we have that $a(r) = \cosh(r)$.  

We analogously define $X_2$ by replacing $X_4$ with $X_5$ in the above description.  
All conclusions follow in an identical manner.  
Thus, we can write the metric ${\bf c_3}$ restricted to $exp_p(X_1,X_2,X_6)$ as
	\begin{equation*}
	\cosh^2(r) (dX_1^2 + dX_2^2 ) + dr^2.
	\end{equation*}

\subsubsection{The vector field $X_3$}
This is also mostly analogous to the definition of $X_1$.
But this time note that $\left( JX_6,X_6 \right)$ spans a complex line in $T_p \C \H^3$ (since it is preserved by its $J$-image).
So $Q = \text{exp}_p \left( \text{span} \left( JX_6,X_6 \right) \right)$ is a complex geodesic in $\C \H^3$ which intersects $\H^3$ orthogonally.
Since this intersection is orthogonal, $Q$ is preserved by the geodesic flow $\phi$.  
Therefore, $\text{span} \left( JX_6 \right)$ is preserved by $d \phi$.  

The set $Q \cap \H^3$ is a (real) geodesic.  
Let $\b(t)$ denote this geodesic parameterized with respect to arc length so that $\b(0) = q$.
Then define $(X_3)_p = (d \pi)^{-1} \b'(0)$.  
There exists a positive real-valued function $b(r,t)$ so that the metric ${\bf c_3}$ restricted to $Q$ is of the form $dr^2 + b^2(r,t) dt^2$.
But since $\R$ acts by isometries on $Q$ via translation along $\b$, the function $b(r,t)$ is independent of $t$.
Then since the curvature of a complex geodesic is $-4$, we have that $b(r) = \cosh(2r)$.

\subsubsection{Conclusion}

\vskip 20pt

\begin{theorem}\label{thm:chn/hn metric}
The complex hyperbolic manifold $\C \H^3 \setminus \H^3$ can be written as $E \times (0, \infty)$  where $E \cong \H^3 \times \mathbb{S}^2$ equipped with the metric
	\begin{equation}\label{eqn:chn/hn metric}
	{\bf c_3} = \cosh^2(r) (dX_1^2 + dX_2^2) + \cosh^2(2r) dX_3^2 + \sinh^2(r) (dX_4^2 + dX_5^2) + dr^2 .
	\end{equation}
\end{theorem}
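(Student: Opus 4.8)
The plan is to assemble the pieces established in the preceding subsections and verify that, in the frame $(X_1, \ldots, X_6)$, the metric $\mathbf{c_3}$ is diagonal with the claimed entries. The diagonal values are already in hand: the restriction of $\mathbf{c_3}$ to $\pi^{-1}(q)$ gives $\la X_4, X_4 \ra = \la X_5, X_5 \ra = \sinh^2(r)$, the analysis of the totally real totally geodesic $2$-planes gives $\la X_1, X_1 \ra = \la X_2, X_2 \ra = \cosh^2(r)$, the complex geodesic computation gives $\la X_3, X_3 \ra = \cosh^2(2r)$, and $\la X_6, X_6 \ra = 1$ since $X_6 = \ddr$ is a unit radial field. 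So the only content remaining is to check that every mixed term vanishes.

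First I would record the orthogonal splitting $T_p \C \H^3 = \V(r) \oplus \mathcal{H}(r) \oplus \text{span}(X_6)$. The decomposition $\mathbf{c_3} = (\mathbf{c_3})_r + dr^2$ shows that $X_6 = \ddr$ is orthogonal to all of $T_p E(r) = \V(r) \oplus \mathcal{H}(r)$, and $\mathcal{H}(r)$ is by definition the orthogonal complement of $\V(r)$ inside $T_p E(r)$. This immediately kills all cross terms between the three blocks: between $X_6$ and each of $X_1, \ldots, X_5$, and between the vertical pair $(X_4, X_5)$ and the horizontal triple $(X_1, X_2, X_3)$.

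Next I would verify orthogonality within each block. Within $\V(r)$ the vectors $X_4, X_5$ are orthogonal because $(\check{X}_4, \check{X}_5)$ is an orthonormal frame on $\S^2$. The key step is orthogonality within $\mathcal{H}(r)$: since $X_1, X_2, X_3$ are positive scalar multiples of $JX_4, JX_5, JX_6$, and since $J$ is a $\mathbf{c_3}$-isometry (the K\"ahler structure), $\la JX_i, JX_j \ra = \la X_i, X_j \ra$ for $i, j \in \{4, 5, 6\}$; as $X_4, X_5, X_6$ are mutually orthogonal, so are $JX_4, JX_5, JX_6$, and hence so are $X_1, X_2, X_3$. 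I would also note the one remaining pairing $\la X_6, X_3 \ra$, proportional to $\la X_6, JX_6 \ra$: because the metric is Hermitian, $\la U, JU \ra = 0$ for every $U$, so this vanishes as well (and is in any case already covered by $X_6 \perp \mathcal{H}(r)$). Collecting the diagonal entries then yields \eqref{eqn:chn/hn metric}.

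The main obstacle is not any single computation but ensuring that this pointwise diagonalization is genuinely a global metric expression on $E \times (0, \infty)$, rather than an identity valid only at the chosen $p$. This is where the invariance of the frame under the geodesic flow does the work: the text has already shown that $d\phi_{sr}$ preserves $\V(r)$, fixes $X_4$ and $X_5$, and preserves $\text{span}(JX_4)$, $\text{span}(JX_5)$, $\text{span}(JX_6)$, while the coefficient functions $a(r)$ and $b(r)$ were shown to be independent of the sphere and geodesic parameters. Thus the orthogonal block structure and the coefficients $\cosh^2(r), \cosh^2(2r), \sinh^2(r)$ are constant along each flow line and along each fiber, so the diagonal form propagates over all of $E \times (0, \infty)$, giving the stated identification $E \cong \H^3 \times \S^2$.
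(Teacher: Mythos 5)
Your proposal is correct and takes essentially the same approach as the paper: the theorem is just the assembly of the construction in Subsection 3.1, and you assemble exactly those pieces --- the coefficients $\cosh^2(r)$, $\cosh^2(2r)$, $\sinh^2(r)$ from the totally geodesic $2$-planes, the complex geodesic, and the fiber $\pi^{-1}(q)$. Your explicit check that the mixed terms vanish (via the splitting $\V(r) \oplus \mathcal{H}(r) \oplus \mathrm{span}(\partial/\partial r)$ and the fact that $J$ is an isometry) only spells out what the paper leaves implicit in the statement that $\mathcal{H}_p(r)$ is spanned by $(JX_4, JX_5, JX_6)$.
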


\vskip 10pt

In equation \eqref{eqn:chn/hn metric}, $dX_1$ through $dX_5$ denote the covector fields dual to the vector fields $X_1$ through $X_5$, respectively.
Lastly, notice that $dX_1^2 + dX_2^2$ is the hyperbolic metric with constant sectional curvature $-1$, and $dX_4^2 + dX_5^2$ is the spherical metric with constant sectional curvature $1$.

\subsection{The warped product metric and curvature formulas in $\C \H^3 \setminus \H^3$}
For some positive, increasing real-valued functions $h, h_r, v : (0,\infty) \to \R$ define
	\begin{equation*}
	{\bf \mu_r} := h^2(r) (dX_1^2 + dX_2^2) + h_r^2(r) dX_3^2 + v^2(r) (dX_4^2 + dX_5^2)    
	\end{equation*}
and
	\begin{equation}\label{eqn:warped metric 2}
	{\bf \mu} := {\bf \mu_r} + dr^2.
	\end{equation}
Of course, ${\bf \mu} = {\bf c_3}$ when $h = \cosh(r)$, $h_r = \cosh(2r)$, and $v = \sinh(r)$.

Define an orthonormal basis $\{ Y_i \}_{i=1}^6$ with respect to $\mu$ by
	\begin{align}
	&Y_1 = \frac{1}{h} X_1	\hskip 50pt	Y_2 = \frac{1}{h} X_2	\hskip 50pt	Y_3 = \frac{1}{h_r} X_3  \label{eqn:orthonormal basis 1}  \\
	&Y_4 = \frac{1}{v} X_4	\hskip 50pt	Y_5 = \frac{1}{v} X_5		\hskip 50pt	Y_6 = X_6.  \nonumber
	\end{align}
Our goal is to compute formulas for the components of the $(4,0)$ curvature tensor $R_\mu$ in terms of the warping functions $h, h_r,$ and $v$ (this is the content of Theorem \ref{thm:curvature tensor 1}).  
As a first step, we need to compute the components of the $(4,0)$ curvature tensor $R_{\bf c_3}$ of the complex hyperbolic metric with respect to the orthonormal basis given above.  
We can do this with the help of formula \eqref{eqn:J curvature formula}.
To use this formula note that, by construction, we have that $JY_4 = Y_1$, $JY_5 = Y_2$, and $JY_6 = Y_3$ (again, when the metric is ${\bf c_3}$, that is, when $h = \cosh(r)$, $h_r = \cosh(2r)$, and $v = \sinh(r)$).
Lastly, we use the notation
	\begin{equation*}
	R^{\bf c_3}_{ijkl} := \la R_{\bf c_3}(Y_i,Y_j)Y_k, Y_l \ra_{\bf c_3}.
	\end{equation*}
Then, up to the symmetries of the curvature tensor, the nonzero components of the $(4,0)$ curvature tensor $R_{\bf c_3}$ are

	\begin{align}
	-4 = &R^{\bf c_3}_{1414} = R^{\bf c_3}_{2525} = R^{\bf c_3}_{3636}  \label{eqn:c3 tensor 1}  \\
	-1 = &R^{\bf c_3}_{1212} = R^{\bf c_3}_{1313} = R^{\bf c_3}_{1515} = R^{\bf c_3}_{1616} = R^{\bf c_3}_{2323} = R^{\bf c_3}_{2424}  \label{eqn:c3 tensor 2}  \\
	&= R^{\bf c_3}_{2626} = R^{\bf c_3}_{3434} = R^{\bf c_3}_{3535} = R^{\bf c_3}_{4545} = R^{\bf c_3}_{4646} = R^{\bf c_3}_{5656}  \nonumber  \\
	-2 = &R^{\bf c_3}_{1425} = R^{\bf c_3}_{1436} = R^{\bf c_3}_{2536}  \label{eqn:c3 tensor 3}  \\
	-1 = &R^{\bf c_3}_{1245} = R^{\bf c_3}_{1346} = R^{\bf c_3}_{2356}  = R^{\bf c_3}_{1524} = R^{\bf c_3}_{1634} = R^{\bf c_3}_{2635}.  \label{eqn:c3 tensor 4}
	\end{align}

\subsection{Lie brackets}
We now need to compute the values of the Lie brackets of the orthogonal basis $\{ X_i \}_{i=1}^6$.  
A first observation is that, by construction, each of these vector fields is invariant under the flow of $\ddr$.  
This implies that $[X_i, X_6] = 0$ for all $1 \leq i \leq 6$.  
From this we can deduce that
	\begin{align*}
	[Y_1,Y_6] = \frac{h'}{h} &Y_1	 \hskip 30pt	[Y_2,Y_6] = \frac{h'}{h} Y_2	\hskip 30pt	[Y_3,Y_6] = \frac{h_r'}{h_r} Y_3  \\
		&[Y_4,Y_6] = \frac{v'}{v} Y_4	\hskip 30pt	[Y_5,Y_6] = \frac{v'}{v} Y_5.
	\end{align*}
	
Next, we know that each Lie bracket is tangent to the level surfaces of $r$.  
Thus, for all $1 \leq i, j \leq 6$, the Lie bracket $[X_i, X_j]$ has no $X_6$ term.  
For all $1 \leq i, j, k \leq 5$ define structure constants $c_{ij}^k$ by
	\begin{equation}\label{eqn:structure constants 1}
	[X_i, X_j] = \sum_{k = 1}^5 c_{ij}^k X_k.
	\end{equation}
	
Two quick observations about the structure constants.  
The first is that $c_{ij}^k = - c_{ji}^k$ due to the anti-symmetry of the Lie bracket.  
The second observation is about the values of $c_{45}^4$ and $c_{45}^5$.  
Recall the definitions for $\check{X}_4$ and $\check{X}_5$ from equation \eqref{eqn:X4 and X5 check}.
Then
	\begin{equation}\label{eqn:first Lie bracket 1}
	\left[ \check{X}_4, \check{X}_5 \right] = \left[ \ddt, \frac{1}{\sin (\th)} \dds \right] = \frac{- \cos (\th)}{\sin^2 (\th)} \dds = - \cot (\th) \check{X}_5.
	\end{equation}
We therefore conclude that $c_{45}^4 = 0$ and $c_{45}^5 = - \cot(\th)$.

The following Theorem gives almost a full description of the values of the Lie brackets.  
Some quantities are only defined up to sign, but this is sufficient to compute the curvature formulas in Theorem \ref{thm:curvature tensor 1}.
The interested reader can find the proof of Theorem \ref{thm:Lie brackets 1} in Section \ref{section:proof of Lie brackets 1}.  

\vskip 15pt

\begin{theorem}\label{thm:Lie brackets 1}
The values for the Lie brackets in equation \eqref{eqn:structure constants 1} are
	\begin{align*}
	&[X_1,X_2] =  \pm X_1  		\hskip 101pt  	[X_1,X_3] =  \mp \cot(\th) X_2 + X_4  \\
	&[X_1,X_4] =  X_3 \mp X_5  	\hskip 84pt	[X_1,X_5] =  - \cot(\th) X_2 \pm X_4  \\
	&[X_2,X_3] =  \pm \cot(\th) X_1 + X_5  	\hskip 48pt	[X_2,X_4] =  0  
	\end{align*}
	\begin{align*}
	&[X_2,X_5] =  \cot(\th) X_1 + X_3  	\hskip 58pt	[X_3,X_4] =  -X_1 \pm \cot(\th) X_5  \\
	&[X_3,X_5] =  -X_2 \mp \cot(\th) X_4  	\hskip 50pt	[X_4,X_5] =  - \cot(\th) X_5
	\end{align*}
In the above equations, all of the $\pm$ and $\mp$ signs are related.  
For example, if it is the case that $[X_1,X_2] = X_1$, then $[X_1,X_4] = X_3 - X_5$ and so on.
\end{theorem}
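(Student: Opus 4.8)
The plan is to recover each bracket from the Levi--Civita connection of ${\bf c_3}$ through the torsion-free identity $[X_i,X_j]=\nabla_{X_i}X_j-\nabla_{X_j}X_i$, so that it suffices to compute the covariant derivatives $\nabla_{X_i}X_j$ at $p$ and expand them in the frame $\{X_k\}$. Since $[X_i,X_6]=0$ is already established and every bracket is tangent to the level sets of $r$, only the structure constants $c_{ij}^k$ with $1\le i,j,k\le 5$ remain. The derivatives internal to the fiber $\pi^{-1}(q)\cong\H^3$ come for free: with its explicit metric $\sinh^2(r)\sigma^2+dr^2$ the vectors $X_4,X_5,X_6$ all lie tangent to this totally geodesic copy of $\H^3$, so $\nabla_{X_a}X_b$ for $a,b\in\{4,5,6\}$ reduces to warped-product and spherical geometry and reproduces \eqref{eqn:first Lie bracket 1} together with $c_{45}^5=-\cot\theta$.

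For the brackets involving a horizontal direction I would bring in two further ingredients. The first is the K\"ahler condition $\nabla J=0$ together with the relations $JY_4=Y_1$, $JY_5=Y_2$, $JY_6=Y_3$, which let one transfer a covariant derivative of a vertical field to its horizontal $J$-image; the care needed here is that these relations hold only at $p$ (and along the radial geodesic), so one may not differentiate them as an identity of vector fields. The second, and decisive, ingredient is that the curvature tensor of ${\bf c_3}$ is already known exactly from the complex space form formula \eqref{eqn:J curvature formula}, namely the components \eqref{eqn:c3 tensor 1}--\eqref{eqn:c3 tensor 4}. Since the metric coefficients are explicit, the connection coefficients are determined by the $c_{ij}^k$ through the Koszul formula; expanding $R_{\bf c_3}(X_i,X_j)X_k$ via \eqref{eqn:curvature tensor notation} and matching $\langle\,\cdot\,,X_l\rangle$ against the known $R^{\bf c_3}_{ijkl}$ therefore yields a system of polynomial equations in the $c_{ij}^k$ whose solution pins down their values. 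Because both the curvature components and the norms $\langle X_i,X_i\rangle$ are unchanged under reflecting a pair of frame directions, this system determines several constants only up to a common sign, which is exactly the source of the correlated $\pm/\mp$ appearing in Theorem \ref{thm:Lie brackets 1}.

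The main obstacle is the behaviour of the horizontal frame $(X_1,X_2,X_3)$ as one moves over the sphere $\mathbb{S}^2$: it twists relative to $J(X_4,X_5,X_6)$, and this twisting is precisely what generates the $\cot\theta$ terms and the off-diagonal contributions in brackets such as $[X_1,X_3]$ and $[X_1,X_4]$. No single totally geodesic surface simultaneously contains one of these directions and the integral curve of the other, so the elementary submanifold computations and the pointwise K\"ahler relations cannot by themselves produce these terms; it is here that the ambient curvature must be used both to generate the missing components and to certify that no further terms occur. The remaining delicate point is the sign bookkeeping -- verifying that all the $\pm$'s are genuinely tied together through a single orientation choice of the frame -- which, as the statement records, is harmless for the subsequent curvature computation in Theorem \ref{thm:curvature tensor 1}.
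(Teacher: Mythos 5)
Your overall skeleton does match the paper's: the fiber brackets come from the explicit spherical coordinates, $[X_i,X_6]=0$ comes from invariance under the radial flow, and the remaining constants are pinned down by playing the known curvature of ${\bf c_3}$ (equations \eqref{eqn:c3 tensor 1}--\eqref{eqn:c3 tensor 4}) and the complex structure against the unknown brackets, with a correlated sign ambiguity left over. But the engine you propose for the main step has a genuine gap: the structure ``constants'' are not constants. They are functions (the answer itself contains $\cot \th$), so when you expand $R_{\bf c_3}$ via \eqref{eqn:curvature tensor notation} using the connection obtained from the Koszul formula \eqref{eqn:Koszul formula}, the second covariant derivatives produce terms of the form $X_m(c_{ij}^k)$. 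The system you obtain is therefore \emph{not} ``a system of polynomial equations in the $c_{ij}^k$''; it is a coupled system in the unknown functions and their first derivatives, and your proposal offers no mechanism for eliminating the derivative terms. The paper's proof is organized precisely around this problem: the bulk of the constants are found from formula \eqref{eqn:Bel 4}, i.e.\ from the mixed components $\la R(\partial r, Y_i)Y_j,Y_k \ra$. For those components the warped-product structure guarantees that only the bracket values themselves appear, multiplied by logarithmic derivatives of the known functions $\cosh(r)$, $\cosh(2r)$, $\sinh(r)$ --- no derivatives of the $c_{ij}^k$ enter, and the equations are \emph{linear} in the unknowns. This device (together with one input from \cite{Min}) is what cuts the $48$ unknowns down to $14$; your plan omits it entirely.

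Second, the ingredient you flag as delicate, $\nabla J=0$ combined with $JY_4=Y_1$, $JY_5=Y_2$, $JY_6=Y_3$, is never made operational in your sketch, and indeed the paper does not use it. What the paper uses instead is the vanishing of the Nijenhuis tensor \eqref{eqn:Nijenhuis tensor}, a purely bracket-level identity that needs only integrability of $J$: applied to the pairs $(Y_1,Y_2)$, $(Y_1,Y_3)$, $(Y_2,Y_3)$ it yields the algebraic relations $c_{24}^1=0$, $c_{15}^2=-\cot(\th)$, $c_{12}^1=c_{15}^4=-c_{14}^5$, $c_{12}^2=-c_{24}^5$, $c_{13}^2=-c_{34}^5$, $c_{23}^1=-c_{35}^4$, closing the system down to the three unknowns $c_{12}^1$, $c_{12}^2$, $c_{13}^2$. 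Only at that final stage does the paper do what you propose doing globally --- compute selected curvature components ($R^{\bf c_3}_{1212}$, $R^{\bf c_3}_{1512}$, $R^{\bf c_3}_{1412}$) from the Koszul connection and match them against known values --- and there the quadratic relation $(c_{12}^1)^2+(c_{12}^2)^2=1$ is exactly the source of the $\pm$ ambiguity you correctly anticipate. So your proposal identifies the right raw inputs, but without \eqref{eqn:Bel 4} (or an equivalent source of derivative-free, linear equations) and without a usable form of the integrability of $J$, the computation you describe cannot be carried out as stated.
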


\subsection{The Levi-Civita connection and formulas for the (4,0) curvature tensor $R_\mu$}
In this Subsection we first compute the Levi-Civita connection $\nabla$ associated to the metric $\mu$ with respect to the frame $(Y_i)_{i=1}^6$.  	
The difficult part in all of this is computing the Lie brackets in Theorem \ref{thm:Lie brackets 1}.
From there it is now a simple calculation using formula \eqref{eqn:connection} to prove the following Theorem.

\vskip 15pt

\begin{theorem}\label{thm:connection 1}
The Levi-Civita connection $\nabla$ compatible with $\mu$ is determined by the $36$ equations
	\begin{align*}
	\bullet \nabla_{Y_1}Y_1 = & \mp \frac{1}{h} Y_2 - \frac{h'}{h} Y_6 \hskip 26pt  \bullet \nabla_{Y_3}Y_4 = - \frac{1}{2} \left( \frac{h}{h_r v} - \frac{h_r}{hv} - \frac{v}{hh_r} \right) Y_1 \pm \frac{1}{h_r} \cot(\th) Y_5  \\
	\bullet \nabla_{Y_1}Y_2 = & \pm \frac{1}{h} Y_1    \hskip 60pt	\bullet \nabla_{Y_3}Y_5 =  - \frac{1}{2} \left( \frac{h}{h_r v} - \frac{h_r}{hv} - \frac{v}{hh_r} \right) Y_2 \mp \frac{1}{h_r} \cot(\th) Y_4  \\
	\bullet \nabla_{Y_1}Y_3 = & \frac{1}{2} \left( \frac{h}{h_r v} - \frac{h_r}{hv} + \frac{v}{h h_r} \right) Y_4  \hskip 44pt	  \bullet \nabla_{Y_4}Y_1 = - \frac{1}{2} \left( \frac{h}{h_r v} + \frac{h_r}{hv} + \frac{v}{hh_r} \right) Y_3  \\
	\bullet \nabla_{Y_1}Y_4 = & - \frac{1}{2} \left( \frac{h}{h_r v} - \frac{h_r}{hv} + \frac{v}{hh_r} \right) Y_3 \mp \frac{1}{h} Y_5   \hskip 112pt  \bullet \nabla_{Y_4}Y_2 = 0  \\ 
	\bullet \nabla_{Y_1}Y_5 = & \pm \frac{1}{h} Y_4	\hskip 132pt  \bullet \nabla_{Y_4}Y_3 = \frac{1}{2} \left( \frac{h}{h_r v} + \frac{h_r}{hv} + \frac{v}{hh_r} \right) Y_1  \\
	\bullet \nabla_{Y_2}Y_1 = & 0    \hskip 238pt		\bullet \nabla_{Y_4}Y_4 = - \frac{v'}{v} Y_6  \\
	\bullet \nabla_{Y_2}Y_2 = & - \frac{h'}{h} Y_6	\hskip 232pt  \bullet \nabla_{Y_4}Y_5 = 0  \\
	\bullet \nabla_{Y_2}Y_3 = & \frac{1}{2} \left( \frac{h}{h_r v} - \frac{h_r}{hv} + \frac{v}{h h_r} \right) Y_5  \hskip 117pt  \bullet \nabla_{Y_5}Y_1 = \frac{1}{v} \cot(\th) Y_2  \\ 
	\bullet \nabla_{Y_2}Y_4 = & 0 	\hskip 91pt  \bullet \nabla_{Y_5}Y_2 = - \frac{1}{v} \cot(\th) Y_1 - \frac{1}{2} \left( \frac{h}{h_r v} + \frac{h_r}{hv} + \frac{v}{hh_r} \right) Y_3  \\
	\bullet \nabla_{Y_2}Y_5 = & - \frac{1}{2} \left( \frac{h}{h_r v} - \frac{h_r}{hv} + \frac{v}{h h_r} \right) Y_3 	\hskip 40pt	\bullet \nabla_{Y_5}Y_3 = \frac{1}{2} \left( \frac{h}{h_r v} + \frac{h_r}{hv} + \frac{v}{hh_r} \right) Y_2  \\
	\bullet \nabla_{Y_3}Y_1 = & \pm \frac{1}{h_r} \cot(\th) Y_2 + \frac{1}{2} \left( \frac{h}{h_r v} - \frac{h_r}{vh} - \frac{v}{hh_r} \right) Y_4	\hskip 40pt  \bullet \nabla_{Y_5}Y_4 = \frac{1}{v} \cot(\th) Y_5  \\
	\bullet \nabla_{Y_3}Y_2 = & \mp \frac{1}{h_r} \cot(\th) Y_1 + \frac{1}{2} \left( \frac{h}{h_r v} - \frac{h_r}{hv} - \frac{v}{hh_r} \right) Y_5    \\
	\bullet \nabla_{Y_3}Y_3 = & - \frac{h_r'}{h_r} Y_6	\hskip 148pt  \bullet \nabla_{Y_5}Y_5 = - \frac{1}{v} \cot(\th) Y_4 - \frac{v'}{v} Y_6	  \\ 
	\bullet \nabla_{Y_1}Y_6 = &\frac{h'}{h} Y_1	\hskip 3pt	\bullet \nabla_{Y_2}Y_6 = \frac{h'}{h} Y_2	\hskip 3pt	\bullet \nabla_{Y_3}Y_6 = \frac{h_r'}{h_r}Y_3	\hskip 3pt	\bullet \nabla_{Y_4}Y_6 = \frac{v'}{v} Y_4	\hskip 3pt	\bullet \nabla_{Y_5}Y_6 = \frac{v'}{v} Y_5  \\
	\bullet \, &0 = \nabla_{Y_6}Y_1 = \nabla_{Y_6}Y_2 = \nabla_{Y_6}Y_3 = \nabla_{Y_6}Y_4 = \nabla_{Y_6}Y_5 = \nabla_{Y_6}Y_6 
	\end{align*}
\end{theorem}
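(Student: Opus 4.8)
The plan is to feed the brackets of Theorem \ref{thm:Lie brackets 1} into formula \eqref{eqn:connection}. For the orthonormal frame $\{Y_i\}_{i=1}^6$ that formula specializes to the Koszul identity
$$\la \nabla_{Y_i} Y_j, Y_k \ra_\mu = \frac{1}{2} \left( \la [Y_i, Y_j], Y_k \ra_\mu - \la [Y_j, Y_k], Y_i \ra_\mu + \la [Y_k, Y_i], Y_j \ra_\mu \right),$$
where the three ``metric-derivative'' terms of the general Koszul formula drop out because $\la Y_i, Y_j \ra_\mu = \d_{ij}$ is constant. Since the frame is orthonormal, each covariant derivative is reconstructed as $\nabla_{Y_i} Y_j = \sum_{k=1}^6 \la \nabla_{Y_i} Y_j, Y_k \ra_\mu \, Y_k$, so the entire theorem reduces to (i) rewriting every bracket $[Y_i, Y_j]$ in the $Y$-frame and (ii) reading off the resulting $36$ sums.

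For step (i) I would first observe that, because each of $h, h_r, v$ depends on $r$ alone and each $X_i$ with $1 \leq i \leq 5$ is tangent to the level surfaces of $r$ (hence annihilates $r$), the Leibniz terms in the bracket of two rescaled fields vanish. Writing $f_i$ for the warping coefficient attached to $X_i$ (so $f_1 = f_2 = h$, $f_3 = h_r$, $f_4 = f_5 = v$), this gives $[Y_i, Y_j] = \frac{1}{f_i f_j}[X_i, X_j]$ for all $1 \leq i, j \leq 5$; the five brackets involving $Y_6 = \ddr$ are exactly those already recorded just before equation \eqref{eqn:structure constants 1}. Substituting the values from Theorem \ref{thm:Lie brackets 1} and re-expressing each $X_k$ on the right as $f_k Y_k$ then yields the nine nonzero $Y$-brackets, and this is where the ratios of warping functions first appear; for example $[X_1, X_3] = \mp \cot(\th) X_2 + X_4$ becomes $[Y_1, Y_3] = \mp \frac{\cot(\th)}{h_r} Y_2 + \frac{v}{h h_r} Y_4$.

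Plugging these into the Koszul identity, the diagonal derivatives $\nabla_{Y_i} Y_i$ reduce to $-\frac{f_i'}{f_i} Y_6$ together with at most one tangential term inherited from the intrinsic geometry of the $\H^3$ or $\S^2$ factor (as in $\nabla_{Y_1} Y_1 = \mp \frac{1}{h} Y_2 - \frac{h'}{h} Y_6$ and $\nabla_{Y_5} Y_5 = - \frac{1}{v}\cot(\th) Y_4 - \frac{v'}{v} Y_6$). The genuinely new content is the family of mixed derivatives, in which the three distinct warping functions combine into the two symmetric expressions $\frac{1}{2}\big( \frac{h}{h_r v} - \frac{h_r}{hv} \pm \frac{v}{hh_r} \big)$ and $\frac{1}{2}\big( \frac{h}{h_r v} + \frac{h_r}{hv} + \frac{v}{hh_r} \big)$, each produced by a signed sum of exactly three bracket coefficients of the types $\frac{v}{hh_r}, \frac{h}{h_r v}, \frac{h_r}{hv}$. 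One verifies, for instance, that $\la \nabla_{Y_1} Y_3, Y_4 \ra_\mu = \frac{1}{2}\big( \frac{v}{hh_r} + \frac{h}{h_r v} - \frac{h_r}{hv} \big)$, matching the stated formula.

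The computation is long but mechanical, so the main obstacle is not any single deduction but the bookkeeping: keeping the coupled $\pm/\mp$ signs of Theorem \ref{thm:Lie brackets 1} consistent across all $36$ equations, and respecting the antisymmetry $c_{ij}^k = - c_{ji}^k$ whenever a bracket is read in reversed order inside the Koszul sum. I would organize the work by first tabulating the nine nonzero $Y$-brackets (together with the five $Y_6$-brackets), and then running the three-term sum pair by pair, which confines the sign-tracking to a single pass and makes the $36$ verifications routine.
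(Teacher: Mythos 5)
Your proposal is correct and follows essentially the same route as the paper: the paper likewise treats Theorem \ref{thm:connection 1} as a direct consequence of the Lie brackets in Theorem \ref{thm:Lie brackets 1} fed into the orthonormal-frame Koszul formula \eqref{eqn:connection}, with your rescaling identity $[Y_i,Y_j] = \frac{1}{f_i f_j}[X_i,X_j]$ (valid since the warping functions depend on $r$ alone and $X_1,\dots,X_5$ annihilate $r$) supplying the passage from $X$-brackets to $Y$-brackets. Your sample computation of $\la \nabla_{Y_1}Y_3, Y_4 \ra_\mu$ and your specialization of Koszul agree, after applying antisymmetry of the bracket, with the paper's formula \eqref{eqn:connection}, so the remaining work is exactly the bookkeeping you describe.
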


\vskip 10pt

By combining Theorem \ref{thm:connection 1} with equation \eqref{eqn:curvature tensor notation}, and remembering that $Y_6 = X_6 = \ddr$ and $X_4 = \ddt$, we compute the following formulas for the $(4,0)$ curvature tensor $R_\mu$.  
As in equations \eqref{eqn:c3 tensor 1} through \eqref{eqn:c3 tensor 4} we use the notation:
	\begin{equation*}
	R^{\mu}_{ijkl} := \la R_{\mu}(Y_i,Y_j)Y_k, Y_l \ra_\mu.
	\end{equation*}

\vskip 20pt

\begin{theorem}\label{thm:curvature tensor 1}
In terms of the basis given in equation \eqref{eqn:orthonormal basis 1}, the only independent nonzero components of the $(4,0)$ curvature tensor $R_{\mu}$ are the following:
	\begin{align*}
	&R^{\mu}_{1212} = - \left( \frac{h'}{h} \right)^2 - \frac{1}{h^2}  \hskip  30pt  R^{\mu}_{4545} = - \left( \frac{v'}{v} \right)^2 + \frac{1}{v^2}  \hskip 30pt  R^{\mu}_{1515} = R^{\mu}_{2424} = - \frac{h' v'}{hv}  \\
	&R^{\mu}_{1414} = R^{\mu}_{2525} = - \frac{v'h'}{v h} - \left( \frac{-v^2}{4h^2h_r^2} - \frac{h^2}{4v^2h_r^2} + \frac{3h_r^2}{4v^2h^2} - \frac{1}{2v^2} + \frac{1}{2h^2} - \frac{1}{2h_r^2} \right)  \\
	&R^{\mu}_{3434} = R^{\mu}_{3535} = - \frac{v'h_r'}{vh_r} - \left( \frac{- v^2}{4h^2h_r^2} + \frac{3h^2}{4v^2h_r^2} - \frac{h_r^2}{4v^2h^2} - \frac{1}{2v^2} - \frac{1}{2h^2} + \frac{1}{2h_r^2}  \right) \\
	&R^{\mu}_{1313} = R^{\mu}_{2323} = - \frac{h' h_r'}{h h_r} - \left( \frac{3v^2}{4h^2 h_r^2} - \frac{h^2}{4v^2h_r^2} - \frac{h_r^2}{4v^2h^2} + \frac{1}{2v^2} + \frac{1}{2h^2} + \frac{1}{2h_r^2} \right)  \\
	&R^{\mu}_{1616} = R^{\mu}_{2626} = - \frac{h''}{h} \hskip 50pt R^{\mu}_{3636} = - \frac{h_r''}{h_r}  \hskip 50pt  R^{\mu}_{4646} = R^{\mu}_{5656} = - \frac{v''}{v}  \\
	&R^{\mu}_{1436} = R^{\mu}_{2536} = \frac{1}{2h_r} \left[ \left( \frac{h}{v} \right)' - \left( \frac{v}{h} \right)' - \left( \frac{h_r^2}{vh} \right)' \right]  \\
	&R^{\mu}_{1634} = R^{\mu}_{2635} = \frac{1}{2h} \left[ - \left( \frac{h_r}{v} \right)' + \left( \frac{v}{h_r} \right)' + \left( \frac{h^2}{vh_r} \right)' \right] \\
	&R^{\mu}_{1346} = R^{\mu}_{2356} = \frac{-1}{2v} \left[ \left( \frac{h}{h_r} \right)' + \left( \frac{h_r}{h} \right)' + \left( \frac{v^2}{h h_r} \right)' \right] \\
	&R^{\mu}_{1425} =  \frac{1}{2v^2} - \frac{1}{2h^2} - \frac{h_r^2}{2 h^2 v^2}  \\
	&R^{\mu}_{1245} =  - \frac{1}{4} \left( \frac{h^2}{h_r^2 v^2} + \frac{h_r^2}{h^2 v^2} + \frac{v^2}{h^2 h_r^2} + \frac{2}{h^2} + \frac{2}{h_r^2} - \frac{2}{v^2} \right)  \\
	&R^{\mu}_{1524} =  - \frac{1}{4} \left( \frac{-h^2}{h_r^2 v^2} + \frac{h_r^2}{h^2 v^2} - \frac{v^2}{h^2 h_r^2} - \frac{2}{h_r^2} \right).
	\end{align*}
\end{theorem}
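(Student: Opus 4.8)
The plan is to compute each component $R^\mu_{ijkl} = \la R_\mu(Y_i,Y_j)Y_k, Y_l\ra_\mu$ directly from the definition \eqref{eqn:curvature tensor notation}, feeding in the Levi-Civita connection of Theorem \ref{thm:connection 1}. Writing the connection in components as $\nabla_{Y_i}Y_k = \sum_{m=1}^6 \Gamma^m_{ik} Y_m$ and the brackets as $[Y_i, Y_j] = \sum_{m=1}^6 B^m_{ij} Y_m$, orthonormality of the frame $\{Y_i\}$ with respect to $\mu$ collapses each component to the finite sum
\begin{equation*}
R^\mu_{ijkl} = Y_j\Gamma^l_{ik} - Y_i\Gamma^l_{jk} + \sum_{m=1}^6 \left( \Gamma^m_{ik}\Gamma^l_{jm} - \Gamma^m_{jk}\Gamma^l_{im} \right) + \sum_{m=1}^6 B^m_{ij}\Gamma^l_{mk}.
\end{equation*}
Thus the theorem reduces to assembling three ingredients: the algebraic products of the connection coefficients, which are already tabulated in Theorem \ref{thm:connection 1}; the directional derivatives $Y_j\Gamma^l_{ik}$ of those coefficients; and the structure constants $B^m_{ij}$ of the orthonormal frame.

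For the directional derivatives I would note that every coefficient in Theorem \ref{thm:connection 1} is a function of $r$ (built from $h$, $h_r$, $v$) times at most the single transcendental factor $\cot(\th)$. Since $Y_6 = \ddr$ and each of $Y_1, \dots, Y_5$ is tangent to the level sets of $r$, only $Y_6$ acts nontrivially on the $r$-dependent part, producing the ordinary derivatives that yield the terms $-h''/h$, $-h_r''/h_r$, and $-v''/v$ in the diagonal curvatures, as well as the $(h'/h)^2$, $(v'/v)^2$, and $h'v'/(hv)$ terms. The only $\th$-dependence enters through $\cot(\th)$, and because $Y_4 = \frac{1}{v}\ddt$ one has $Y_4(\cot(\th)) = -\frac{1}{v \sin^2(\th)}$, while the remaining frame fields annihilate it. For the structure constants I would first observe that, writing $Y_i = \frac{1}{\omega_i} X_i$ with $\omega_i \in \{h, h_r, v\}$ depending only on $r$, and using that $X_i \omega_j = 0$ for $1 \le i \le 5$, one gets $[Y_i, Y_j] = \frac{1}{\omega_i \omega_j}[X_i, X_j]$ for $1 \le i, j \le 5$; Theorem \ref{thm:Lie brackets 1} then supplies the $B^m_{ij}$ explicitly, and the brackets $[Y_i, Y_6]$ are recorded immediately before that theorem.

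With these in hand, each $R^\mu_{ijkl}$ becomes a finite expression that I would expand and simplify term by term, using the symmetries of the curvature tensor to cut down the list of genuinely independent components. Two cancellations serve both as consistency checks and as a guide to the algebra. First, the $\cot(\th)$ contributions, together with the $\pm/\mp$ sign ambiguities inherited from Theorem \ref{thm:Lie brackets 1}, must cancel, since the final formulas are $\th$-free and sign-definite; the terms from $\nabla_{[Y_i,Y_j]}Y_k$ are precisely what absorb the $\cot(\th)$ pieces arising from the second covariant derivatives. Second, for the mixed components such as $R^\mu_{1436}$ the scattered products of $h$, $h_r$, $v$ and their first derivatives must reorganize into the compact total-derivative forms like $\frac{1}{2h_r}\left[ \left( \frac{h}{v} \right)' - \left( \frac{v}{h} \right)' - \left( \frac{h_r^2}{vh} \right)' \right]$. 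As a final check I would substitute $h = \cosh(r)$, $h_r = \cosh(2r)$, $v = \sinh(r)$ and confirm that the output recovers the complex hyperbolic values \eqref{eqn:c3 tensor 1} through \eqref{eqn:c3 tensor 4}.

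I expect the main obstacle to be organizational rather than conceptual: the second covariant derivatives require tracking, for each of the many nonzero connection coefficients, both its $r$-derivative and its products with the other coefficients, and the $\cot(\th)$ terms surface in intermediate steps of several components where they must be shown to cancel against the bracket contributions. Disciplined use of the curvature symmetries and of the coupled $\pm$ signs is what keeps this bookkeeping tractable.
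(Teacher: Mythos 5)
Your proposal is correct and follows essentially the same route as the paper: the paper's proof of Theorem \ref{thm:curvature tensor 1} consists precisely of inserting the connection coefficients of Theorem \ref{thm:connection 1} into the curvature definition \eqref{eqn:curvature tensor notation}, ``remembering that $Y_6 = X_6 = \ddr$ and $X_4 = \ddt$'' --- i.e., exactly your observation that only $Y_6$ differentiates the $r$-dependent coefficients and only $Y_4 = \frac{1}{v}\ddt$ differentiates the $\cot(\th)$ factors --- with the frame brackets supplied by Theorem \ref{thm:Lie brackets 1} and the relations recorded just before it. Your expanded component formula, the reduction $[Y_i,Y_j] = \frac{1}{\omega_i \omega_j}[X_i,X_j]$, and the final check against \eqref{eqn:c3 tensor 1}--\eqref{eqn:c3 tensor 4} are just the explicit details of the computation the paper leaves unwritten.
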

	
\vskip 10pt
	
It is a tedious exercise in hyperbolic trigonometric identities to check that, when $h = \cosh(r)$, $h_r = \cosh(2r)$, and $v = \sinh(r)$, the above formulas reduce to the constants in equations \eqref{eqn:c3 tensor 1} through \eqref{eqn:c3 tensor 4}.  
Also, note that the first nine equations above give the sectional curvatures of the coordinate planes, while the last six equations are formulas for the nonzero mixed terms.

Finally, notice that the above curvature formulas contain all of the formulas that arise in the analogous $\C \H^n \setminus \H^n$ for general $n$.  
In general, one can write the complex hyperbolic metric ${\bf c_n}$ as
	\begin{equation*}
	{\bf c_n} = \cosh^2(r) {\bf h_{n-1}} + \cosh^2(2r) dX_n^2 + \sinh^2(r) {\bf \s_{n-1}} + dr^2
	\end{equation*}
and the corresponding warped-product metric as
	\begin{equation*}
	{\bf \mu_n} = h^2(r) {\bf h_{n-1}} + h_r^2(r) d X_n^2 + v^2(r) {\bf \s_{n-1}} + dr^2
	\end{equation*}
where ${\bf \s_{n-1}}$ is the round metric on $\S^{n-1}$ and the vector field $X_n$ is defined in the same manner as $X_3$.  
The curvature formulas for the base $\H^{n}$ are encoded in the formulas for $R^{\mu}_{1212}$ and $R^{\mu}_{1313}$.  
All curvature formulas for the $\S^{n-1}$ factor are contained in the term $R^{\mu}_{4545}$.
Note that neither of these cases contain any mixed terms.
Adding in the curvature formulas above of the form $R^{\mu}_{1414}$, $R^{\mu}_{3434}$, $R^{\mu}_{1425}$, $R^{\mu}_{1245}$, and $R^{\mu}_{1524}$ gives all curvature formulas for $h^2(r) {\bf h_{n-1}} + h_r^2(r) dX_n^2 + v^2(r) {\bf \s_{n-1}}$ (this is where most of the mixed terms appear).  
And then all of the formulas above containing a ``6" give the rest of the curvature formulas for $\mu_n$.

\vskip 20pt

\section{Curvature formulas for warped product metrics on $\C \H^n \setminus \C \H^k$}\label{section:chn/chk}

As mentioned in the Introduction, for simplicity we are going to restrict ourselves to the case when $n=5$ and $k=2$.  
These are the smallest choices for $n$ and $k$ which capture every formula for the curvature tensor in the general case, so nothing is lost with this restriction (see the comments after Theorem \ref{thm:curvature tensor 2} for more discussion).

\subsection{Expressing the metric in $\C \H^5$ in spherical coordinates about $\C \H^2$}
Let ${\bf c_5}$ denote the complex hyperbolic metric on $\C \H^5$ normalized to have constant holomorphic sectional curvature $-4$.  
Since $\C \H^2$ is a complete totally geodesic submanifold of the negatively curved manifold $\C \H^5$, there exists an orthogonal projection map $\pi : \C \H^5 \to \C \H^2$.  
This map $\pi$ is a fiber bundle whose fibers are totally geodesic $6$-planes isometric to $\C \H^3$.  

For $r > 0$ let $E(r)$ denote the $r$-neighborhood of $\C \H^2$.  
Then $E(r)$ is a real hypersurface in $\C \H^5$, and consequently we can decompose ${\bf c_5}$ as
	\begin{equation*}
	{\bf c_5} = ({\bf c_5})_r + dr^2
	\end{equation*}
where $({\bf c_5})_r$ is the induced Riemannian metric on $E(r)$.  
Let $\pi_r : E(r) \to \C \H^2$ denote the restriction of $\pi$ to $E(r)$.  
Note that $\pi_r$ is an $\S^5$-bundle whose fiber over any point $q \in \C \H^2$ is (topologically) the $5$-sphere of radius $r$ in the totally geodesic $6$-plane $\pi^{-1}(q)$.  
The tangent bundle splits as an orthogonal sum $\mathcal{V}(r) \oplus \mathcal{H}(r)$ where $\mathcal{V}(r)$ is tangent to the $5$-sphere $\pi_r^{-1}(q)$ and $\mathcal{H}(r)$ is the orthogonal complement to $\mathcal{V}(r)$.  
Note that this copy of $\S^5$ does {\em not} have constant sectional curvature equal to $1$, but rather it is an example of a {\em Berger sphere}.  
This will be discussed further below.

For $r, s > 0$ there exists a diffeomorphism $\phi_{sr}: E(s) \to E(r)$ induced by the geodesic flow along the totally geodesic $6$-planes orthogonal to $\C \H^2$.   
Fix $p \in E(r)$ arbitrary, let $q = \pi(p) \in \C \H^2$, and let $\g$ be the unit speed geodesic such that $\g(0) = q$ and $\g(r) = p$. 
In what follows, all computations are considered in the tangent space $T_p E(r)$. 

Note that $\V(r)$ is tangent to both $E(r)$ and the totally geodesic $6$-plane $\pi^{-1}(q)$.  
Then since $\pi^{-1}(q)$ is preserved by the geodesic flow, we have that $d \phi_{sr}$ takes $\V(s)$ to $\V(r)$.  
Consider the complex geodesic $P = exp(span(\ddr, J \ddr))$. 
P intersects $E(r)$ orthogonally, and $P \cap E(r)$ is isometric to a circle of radius $r$.  
Thus, since a complex geodesic has curvature $-4$, there exists a suitable identification $P \cong \S^1 \times (0,\infty)$ where the metric ${\bf c_5}$ restricted to $P$ can be written as
	\begin{equation*}
	\frac{1}{4} \sinh^2(2r) d \th^2 + dr^2
	\end{equation*}
where $d \th^2$ denotes the round metric on the unit circle $\S^1$.  
Note that the presence of the $``1/4"$ is to make the metric complete when extended to the core $\C \H^2$.

Notice that $\ddt$ is a vector field on the five sphere $\S^5$ mentioned above.  
More generally, thinking of $\S^5$ as the unit sphere in $\C^3$ with respect to the usual Hermitian metric, there is an obvious free action of the circle $\S^1$ on $\S^5$.  
The unit tangent vector field with respect to this action corresponds to the vector field $\ddt$ above.
This action fibers $\S^5$ over the complex projective plane $\C \P^2$, and the Riemannian submersion metric on this fiber bundle is an example of a Berger sphere (see \cite{Farrell Jones} pg.\ 59 for more details).  
Let $\a(t)$ be a unit speed geodesic in $\S^5$ orthogonal to $J \ddr$ such that $\a(0) = p$.
Then $\exp_p(\a'(0), \partial r)$ forms a totally real totally geodesic 2-plane in $\C \H^5$.  
Thus the curvature of this $2$-plane is $-1$.  
Since the direction of $\a$ orthogonal to $J \ddr$ was arbitrary, we can write the Riemannian metric $({\bf c_5})_r$ restricted to $\mathcal{V}(r)$ as
	\begin{equation*}
	\sinh^2(r) {\bf p_2} + \frac{1}{4} \sinh^2(2r) d \th^2
	\end{equation*}
where ${\bf p_2}$ denotes the complex projective metric on $\C \P^2$.  

Now let $\b(t)$ be any unit speed geodesic in $\C \H^2$ such that $\b(0) = q$.  
Then $Q = exp(span(\b'(0), \g'(0)))$ is a totally real totally geodesic submanifold of $\C \H^5$, and thus $K(\b', \g') = -1$.  
Therefore, the metric ${\bf c_5}$ restricted to $Q$ can be written as $\cosh^2(r) dt^2 + dr^2$.  
But since $\g$ was arbitrary, we can write the metric on the 5-dimensional submanifold determined by $\C \H^2$ and $\ddr$ as $\cosh^2 {\bf c_2} + dr^2$.
This leads to the following.

\vskip 20pt

\begin{theorem}\label{thm:chn/chk metric}
The complex hyperbolic manifold $\C \H^5 \setminus \C \H^2$ can be written as $E \times (0, \infty)$  where $E \cong \C \H^2 \times \S^5$ equipped with the metric
	\begin{equation}\label{eqn:chn/chk metric}
	{\bf c_5} = \cosh^2(r) {\bf c_2} + \sinh^2(r) {\bf p_2} + \frac{1}{4} \sinh^2(2r) d \th^2 + dr^2 .
	\end{equation}
\end{theorem}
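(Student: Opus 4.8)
The plan is to treat this Theorem as the synthesis of the tubular-neighborhood analysis carried out in the preceding paragraphs, so that the proof reduces to assembling the horizontal and vertical pieces of $({\bf c_5})_r$ and verifying their orthogonality. First I would record the product structure: the geodesic flow $\phi_{sr}$ along the fibers of $\pi$ identifies all of the hypersurfaces $E(r)$ with a single manifold $E \cong \C \H^2 \times \S^5$, and the decomposition ${\bf c_5} = ({\bf c_5})_r + dr^2$ together with this identification realizes $\C \H^5 \setminus \C \H^2$ as $E \times (0,\infty)$. It then remains to compute $({\bf c_5})_r$ on the orthogonal splitting $T E(r) = \mathcal{H}(r) \oplus \V(r)$.

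For the horizontal distribution $\mathcal{H}(r)$, I would use the observation that a unit tangent vector to $\C \H^2$ at $q$ lifts, via the geodesic flow, to a horizontal vector field along $\g$. Any such lift spans, together with $\g'$, a totally real totally geodesic $2$-plane $Q$ of constant curvature $-1$. A standard Jacobi-field computation on a curvature $-1$ plane produces the warping factor $\cosh^2(r)$, so that $({\bf c_5})_r$ restricted to $\mathcal{H}(r)$ equals $\cosh^2(r) {\bf c_2}$; since the geodesic $\b$ of $\C \H^2$ was arbitrary this recovers the full horizontal metric.

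For the vertical distribution $\V(r)$, which is tangent to the fiber $\pi^{-1}(q)$ and to the cross-sectional $\S^5$, the key is to separate the Hopf direction from its complement. The vector $J\ddr$ generates the complex geodesic $P$ of curvature $-4$, whose intersection with $E(r)$ is a circle and contributes the factor $\frac{1}{4}\sinh^2(2r)\, d\th^2$; every direction $\a'(0) \in \V(r)$ orthogonal to $J\ddr$ spans with $\ddr$ a totally real totally geodesic $2$-plane of curvature $-1$, contributing the factor $\sinh^2(r)$. Passing these scalings through the Riemannian submersion $\S^5 \to \C \P^2$ whose fibers are the Hopf circles yields $\sinh^2(r) {\bf p_2} + \frac{1}{4}\sinh^2(2r)\, d\th^2$ on $\V(r)$. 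Summing the horizontal and vertical contributions and adjoining $dr^2$ gives equation \eqref{eqn:chn/chk metric}.

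The main obstacle is the vertical part. Unlike the real hyperbolic case, the two warping factors $\sinh^2(r)$ and $\frac{1}{4}\sinh^2(2r)$ differ, so $\V(r)$ is genuinely a Berger sphere rather than a round one, and the care required is to confirm both that the Hopf direction $J\ddr$ is $({\bf c_5})_r$-orthogonal to its complement within $\V(r)$ and that the induced metric on the quotient is precisely the Fubini--Study metric ${\bf p_2}$, normalized consistently with the holomorphic sectional curvature $-4$ scaling of ${\bf c_5}$. Once this orthogonality and normalization are pinned down, the remaining verification is the elementary Jacobi-field bookkeeping indicated above.
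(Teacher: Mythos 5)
Your proposal follows essentially the same route as the paper: decompose $TE(r)$ into horizontal and vertical parts, obtain the $\cosh^2(r)$ factor from totally real totally geodesic $2$-planes of curvature $-1$ spanned by lifts of geodesics in $\C\H^2$, and split the vertical part into the Hopf direction $J\ddr$ (the complex geodesic of curvature $-4$, giving $\frac{1}{4}\sinh^2(2r)\,d\th^2$) and its orthogonal complement (totally real planes of curvature $-1$, giving $\sinh^2(r){\bf p_2}$ via the Hopf fibration $\S^5 \to \C\P^2$). The paper's argument is exactly this assembly, with the Berger-sphere structure and the normalization of ${\bf p_2}$ handled by the same submersion observation you flag as the delicate point.
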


\vskip 10pt

\subsection{The warped product metric, orthonormal basis, and curvature formulas in $\C \H^5 \setminus \C \H^2$}
For some positive, increasing real-valued functions $h, v, v_r : (0,\infty) \to \R$ define the Riemannian metrics
	\begin{equation*}
	{\bf \g_{r,\th}} = h^2(r) {\bf c_2} + v^2(r) {\bf p_2}
	\end{equation*}
	\begin{equation*}
	{\bf \g_r} := {\bf \g_{r,\th}} + \frac{1}{4} v_r^2(r) d \th^2    
	\end{equation*}
and
	\begin{equation}\label{eqn:/g}
	{\bf \g} := {\bf \g_r} + dr^2.
	\end{equation}
Of course, ${\bf \g} = {\bf c_5}$ when $h = \cosh(r)$, $v = \sinh(r)$, and $v_r = \sinh(2r)$.

For the remainder of this Section, fix $p = (q_1,\bar{q},r) \in \C \H^2 \times \S^5 \times (0,\infty) \cong \C \H^5 \setminus \C \H^2$, and write $\bar{q} \in \S^5$ as $(q_2, \th)$ where $q_2 \in \C \P^2$ and $\th \in \S^1$.  
Let $(\check{X}_1,\check{X}_2, \check{X}_3, \check{X}_4)$ be an orthonormal collection of vector fields near $q_1 \in \C \H^2$ which satisfies:
	\begin{enumerate}
	 \item  $[\check{X}_i,\check{X}_j]_{q_1} = 0$ for all $1 \leq i, j \leq 4$.
	\item  $J \check{X}_2|_{q_1} = \check{X}_1|_{q_1}$ and $J \check{X}_4|_{q_1} = \check{X}_3|_{q_1}$.
	\end{enumerate}
Define an analogous collection of vector fields $(\check{X}_5, \check{X}_6, \check{X}_7, \check{X}_8)$ about $q_2 \in \C \P^2$ so that $J \check{X}_6|_{q_2} = \check{X}_5|_{q_2}$,  $J \check{X}_8|_{q_2} = \check{X}_7|_{q_2}$, and $[\check{X}_i,\check{X}_j]_{q_2} = 0$ for all $5 \leq i, j \leq 8$.  
Extend both collections to vector fields $(X_1, \hdots, X_8)$ near $p$. 
Lastly, let $X_9 = \ddt$ and $X_{10} = \ddr$.  

Define an orthonormal basis $\{ Y_i \}_{i=1}^8$ with respect to $\g$ by
	\begin{align}
	&Y_1 = \frac{1}{h} X_1	\hskip 25pt	Y_2 = \frac{1}{h} X_2	\hskip 25pt	Y_3 = \frac{1}{h} X_3		\hskip 25pt	Y_4 = \frac{1}{h} X_4  \hskip 25pt  Y_5 = \frac{1}{v} X_5  \label{eqn:orthonormal basis 2}	  \\
	&Y_6 = \frac{1}{v} X_6		\hskip 25pt	Y_7 = \frac{1}{v} X_7		\hskip 25pt	Y_8 = \frac{1}{v} X_8	\hskip 25pt	Y_9 = \frac{1}{\frac{1}{2} v_r} X_9  \hskip 25pt  Y_{10} = X_{10}.  \nonumber
	\end{align}
	
Our goal is to compute formulas for the components of the $(4,0)$ curvature tensor $R_\g$ in terms of the warping functions $h, v,$ and $v_r$.  
As a first step, we need to compute the components of the $(4,0)$ curvature tensor $R_{\bf c_5}$ of the complex hyperbolic metric with respect to the orthonormal basis given above.  
Just as in Section \ref{section:chn/hn} we can do this with the help of formula \eqref{eqn:J curvature formula}.
To use this formula note that, by construction, we have that $JY_2 = Y_1$, $JY_4 = Y_3$, $JY_6 = Y_5$, $JY_8 = Y_7$, and $J Y_{10} = Y_9$ at the point $p$ (and again, when the metric is ${\bf c_5}$.  So, when $h = \cosh(r)$, $v = \sinh(r)$, and $v_r = \sinh(2r)$).
Lastly, we use the notation
	\begin{equation*}
	R^{\bf c_5}_{ijkl} := \la R_{\bf c_5}(Y_i,Y_j)Y_k, Y_l \ra_{\bf c_5}.
	\end{equation*}
Then, up to the symmetries of the curvature tensor, the nonzero components of the $(4,0)$ curvature tensor $R_{\bf c_5}$ are

	\begin{align} 
	-4 = &R^{\bf c_5}_{1212} = R^{\bf c_5}_{3434} = R^{\bf c_5}_{5656} = R^{\bf c_5}_{7878} = R^{\bf c_5}_{9,10,9,10}  \label{eqn:c4 tensor 1}  \\
	-1 = &R^{\bf c_5}_{ijij} \text{ where } \{ i,j \} \nin \{ \{1,2 \}, \{ 3, 4 \}, \{ 5, 6 \}, \{ 7, 8 \}, \{ 9, 10 \} \}  \label{eqn:c4 tensor 2}  \\
	-2 = &R^{\bf c_5}_{ijkl} \text{ where }  (i,j) \neq (k, l) \in \{ (1,2), ( 3,4), (5,6), (7,8), (9,10) \}  \label{eqn:c4 tensor 3}  \\  
	-1 = &R^{\bf c_5}_{ikjl} \text{ where }  (i,j) \neq (k, l) \in \{ (1,2), ( 3, 4), ( 5, 6), ( 7, 8), ( 9, 10) \}    \label{eqn:c4 tensor 4}  \\
	1 = &R^{\bf c_5}_{iljk} \text{ where }  (i,j) \neq (k, l) \in \{ (1,2), ( 3, 4), ( 5, 6), ( 7, 8), ( 9, 10) \}  \label{eqn:c4 tensor 5}
	\end{align}
	
Let us quickly note that, since $\C \P^2$ is dual to $\C \H^2$, we have the following curvature formulas for $R_{\bf p_2}$:
	\begin{align*}
	4 &= R^{\bf p_2}_{5656} = R^{\bf p_2}_{7878}  \\
	1 &= R^{\bf p_2}_{5757} = R^{\bf p_2}_{5858} = R^{\bf p_2}_{6767} = R^{\bf p_2}_{6868}  \\
	2 &= R^{\bf p_2}_{5678} = 2 R^{\bf p_2}_{5768} = -2 R^{\bf p_2}_{5867}.
	\end{align*}
In the above formulas, $R^{\bf p_2}_{ijkl} := \la R_{\bf p_2}(Y_i,Y_j)Y_k,Y_l \ra_{\bf p_2}$ and with the abuse of notation of $Y_i$ denoting the restriction of $Y_i$ to $\C \P^2$.

\subsection{Lie brackets and curvature formulas for ${\bf \g_{r, \th}}$}
The vector fields $\{ X_i \}_{i=1}^{10}$ form an orthogonal frame near $p$ which satisfies the following properties (at $p$):
	\begin{enumerate}
	\item $[X_i,X_j]$ is tangent to the level surfaces of $r$ for $1 \leq i, j \leq 9$.
	\item $[X_i,X_j]$ is tangent to $\C \H^2 \times \S^1$ for $1 \leq i, j \leq 4$, where $\S^1 \cong \text{exp}_p(J \ddr)$.
	\item $[X_i,X_j]$ is tangent to $\S^5$ for $5 \leq i, j \leq 8$.
	\item $[X_i,X_{10}] = 0$ since $X_i$ is invariant under the flow of $\ddr$ for $1 \leq i \leq 9$.
	\item $[X_i,X_9] = 0$ since $X_i$ is invariant under the flow of $\ddt$ for $1 \leq i \leq 8$.
	\item $[X_i, X_j] = 0$ for $i \in \{ 1, 2, 3, 4 \}$ and $j \in \{5, 6, 7, 8 \}$ since these vector fields were defined via inclusion.
	\end{enumerate}
By the above points, and since $[\check{X}_i,\check{X}_j]_p = 0$ for all $1 \leq i, j \leq 8$, there exist {\em structure constants} $c_{ij}$ such that $[X_i,X_j]_p = c_{ij} X_9$.
Note that $c_{ij} = - c_{ji}$.  
The following Lemma provides the values for the structure constants.

\vskip 15pt

\begin{lemma}\label{lemma:structure constants}
The values for the structure constants are $c_{12} = c_{34} = c_{56} = c_{78} = 2$, and all other (independent) structure constants are equal to zero.
\end{lemma}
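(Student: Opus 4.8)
The plan is to compute each structure constant through the identity $c_{ij}\,\la X_9, X_9 \ra_{\bf c_5} = \la [X_i,X_j], X_9 \ra_{\bf c_5}$, working throughout with the complex hyperbolic metric ${\bf c_5}$ (the Lie brackets are intrinsic to the vector fields and so do not depend on the warping functions). Writing $N = \ddr$ for the unit normal to $E(r)$, the construction of the $\th$-coordinate shows that $JN$ is tangent to the circle $P \cap E(r)$, and hence proportional to $X_9 = \ddt$; explicitly $X_9 = \frac{1}{2}\sinh(2r)\, JN$ for ${\bf c_5}$. The first step is therefore to rewrite $\la [X_i,X_j], X_9 \ra$ as a multiple of $\la [X_i,X_j], JN\ra$ and to expand the latter using $[X_i,X_j] = \nabla_{X_i}X_j - \nabla_{X_j}X_i$.

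Next I would exploit that ${\bf c_5}$ is K\"ahler, so $\nabla J = 0$ and $J$ is skew-adjoint. For $U$ tangent to $E(r)$ this gives
\[
\la \nabla_U V, JN \ra = -\la V, J\nabla_U N\ra + U\la V, JN\ra .
\]
The derivative term vanishes identically for $V \in \{X_1,\dots,X_8\}$: the fields $X_1,\dots,X_4$ are horizontal, hence orthogonal to the fiber direction $JN$, while $X_5,\dots,X_8$ are orthogonal to the Hopf direction $X_9 \parallel JN$, so $\la X_i, JN\ra \equiv 0$ near $p$ in each case. What remains involves $\nabla_U N$, the shape operator of the tube $E(r)$. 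Since the metric \eqref{eqn:chn/chk metric} is block diagonal with each coefficient a function of $r$ alone, the adapted frame diagonalizes the second fundamental form, and one has $\nabla_{X_i} N = \mu_i X_i$ with $\mu_i = \tanh(r)$ for $1 \le i \le 4$ and $\mu_i = \coth(r)$ for $5 \le i \le 8$ (the logarithmic $r$-derivatives of $\cosh r$ and $\sinh r$). Substituting yields the clean formula
\[
\la [X_i,X_j], JN \ra = -(\mu_i + \mu_j)\,\la X_j, J X_i\ra .
\]

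The final step is purely algebraic, using the $J$-relations built into the frame ($JX_2 = X_1$, $JX_4 = X_3$, $JX_6 = X_5$, $JX_8 = X_7$) together with property (6). The pairing $\la X_j, JX_i\ra$ is a nonzero multiple of a metric coefficient precisely when $\{i,j\}$ is one of the complex pairs $\{1,2\},\{3,4\},\{5,6\},\{7,8\}$, and vanishes otherwise, which immediately produces the asserted pattern of zeros. For a surviving pair one computes, for instance, $\la X_2, JX_1\ra = -\cosh^2(r)$ and $\mu_1 + \mu_2 = 2\tanh(r)$, so $\la[X_1,X_2], JN\ra = \sinh(2r)$ and hence $c_{12} = \tfrac{1}{2}\sinh(2r)\cdot\sinh(2r)/\la X_9,X_9\ra = 2$, using $\la X_9, X_9 \ra = \tfrac14\sinh^2(2r)$; the pairs $\{3,4\},\{5,6\},\{7,8\}$ are handled identically, interchanging the roles of $\cosh$ and $\sinh$.

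The main obstacle is not the algebra but correctly pinning down the two geometric inputs that produce the constant $2$: first, that $X_9$ is the suitably normalized Reeb/Hopf direction $J\ddr$, so that the $X_9$-component of a bracket is governed by the K\"ahler form; and second, that the shape-operator eigenvalues combine with the Berger normalization $\tfrac14\sinh^2(2r)$ of the $d\th^2$ term to cancel all $r$-dependence and leave exactly $2$. Conceptually this constant records that $d\eta = 2\,\omega$ for the contact form $\eta$ dual to $X_9$, which is where the holomorphic-sectional-curvature normalization $-4$ enters; justifying the vanishing of the derivative term via the bundle structure is the one place where a little care is needed.
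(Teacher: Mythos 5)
Your proposal is correct, but it takes a genuinely different route from the paper. The paper never touches the shape operator or the K\"ahler identity $\nabla J = 0$: it reverse-engineers the brackets from curvature, plugging the already-computed components of $R_{\bf c_5}$ (equations \eqref{eqn:c4 tensor 3}--\eqref{eqn:c4 tensor 5}, obtained from the $J$-curvature formula \eqref{eqn:J curvature formula}) into the mixed-term warped-product formula \eqref{eqn:Bel 4}; for instance $4 = 2R^{\bf c_5}_{10,9,1,2} = \la [Y_1,Y_2],Y_9\ra_{\bf c_5}\left(\ln\left[\tfrac14\sinh^2(2r)/\cosh^2(r)\right]\right)' = 2c_{12}$, and the vanishing constants come from the same identity with left-hand side $0$. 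This is the same strategy the paper uses at length in Section \ref{section:proof of Lie brackets 1}, so the lemma costs almost nothing there. Your argument instead computes $\la [X_i,X_j], X_9\ra$ directly from the geometry: the identification $X_9 = \tfrac12\sinh(2r)\,JN$, parallelism of $J$, and the diagonal shape operator of the tube $E(r)$ give $\la [X_i,X_j], JN\ra = -(\mu_i+\mu_j)\la X_j, JX_i\ra$, from which the pattern ($2$ on complex pairs, $0$ otherwise) falls out of the $J$-relations in the frame. What each buys: the paper's method needs only pointwise data at $p$ and reuses machinery it must set up anyway, sidestepping the one delicate point in your argument --- that $\la X_j, JN\ra$ vanishes \emph{identically near} $p$, not just at $p$ (this is fine, since $JN$ lies in $\mathcal{V}(r)$ because the fibers $\pi^{-1}(q)$ are complex submanifolds, and within each sphere fiber $JN$ is the Hopf direction orthogonal to the horizontal lifts $X_5,\dots,X_8$, but it does require the bundle-structure justification you flag). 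Your method is self-contained --- it uses neither \eqref{eqn:Bel 4} nor the curvature values of ${\bf c_5}$ --- and it explains conceptually why the answer is $2$ (the contact relation $d\eta = 2\omega$ under the normalization of holomorphic curvature $-4$), which the paper's computation leaves opaque.
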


\vskip 10pt

A quick note is that by ``independent" structure constants we just mean that, obviously, $c_{21} = c_{43} = c_{65} = c_{87} = - 2 \neq 0$.

\begin{proof}
All of the structure constants can be found by combining formula \eqref{eqn:Bel 4} with the curvature formulas \eqref{eqn:c4 tensor 3} through \eqref{eqn:c4 tensor 5}.  
To see that $c_{12}= 2$, we combine equation \eqref{eqn:c4 tensor 3} with \eqref{eqn:Bel 4} to obtain
	\begin{align*}
	4 = 2R^{\bf c_5}_{10,9,1,2} &= 0 + 0 + \la [Y_1,Y_2],Y_9 \ra_{\bf c_5} \left( \ln \left[ \frac{\frac{1}{4} \sinh^2(2r)}{\cosh^2(r)} \right] \right)'  \\
	&= \frac{c_{12} \sinh(2r)}{\cosh^2(r)} \left( \frac{\cosh(r)}{\sinh(r)} \right)  = 2 c_{12}.
	\end{align*}
An analogous argument shows that $c_{34} = c_{56} = c_{78} = 2$.  
To see that $c_{13} = c_{14} = c_{23} = c_{24} = c_{57} = c_{58} = c_{67} = c_{68} = 0$ we use the same equations as above, but note that the left hand side is now $0$ instead of $4$.

Lastly, to see that $c_{15} = 0$, note that
	\begin{align*}
	0 = R^{\bf c_5}_{10,1,5,9} &= 0 + \la [Y_5,Y_1],Y_9 \ra_{\bf c_5} \left( \ln \left[ \frac{\frac{1}{2} \sinh(2r)}{\sinh(r)} \right] \right)' + 0  \\
	&= \frac{- \frac{1}{2} c_{15} \sinh(2r)}{\sinh(r) \cosh(r)} ( \ln 2 \cosh(r) )' = - c_{15} \tanh(r).
	\end{align*}
The argument that the remaining structure constants are $0$ is identical to the argument above.
\end{proof}

We now, for some fixed $r$ and $\th$, compute the components of the $(4,0)$ curvature tensor $R_{\bf \l_{r, \th}}$ with respect to the orthonormal frame $\{ Y_i \}_{i=1}^8$.  
Since $[X_i,X_j] = 0$ for $i \in \{ 1, 2, 3, 4 \}$ and $j \in \{ 5, 6, 7, 8 \}$, the metric ${\bf \g_{r, \th}}$ is a product metric.  
Then since the $(4,0)$ curvature tensor scales like the metric, up to the symmetries of the curvature tensor the only nonzero components of $R_{\bf \g_{r, \th}}$ are
	\begin{align*}
	&R^{\bf \g_{r, \th}}_{1212} = R^{\bf \g_{r, \th}}_{3434} = - \frac{4}{h^2}  \hskip 68pt  R^{\bf \g_{r, \th}}_{1313} = R^{\bf \g_{r, \th}}_{1414} = R^{\bf \g_{r, \th}}_{2323} = R^{\bf \g_{r, \th}}_{2424} = - \frac{1}{h^2}  \\
	&  R^{\bf \g_{r, \th}}_{5656} = R^{\bf \g_{r, \th}}_{7878} = \frac{4}{v^2}  \hskip 84pt  R^{\bf \g_{r, \th}}_{5757} = R^{\bf \g_{r, \th}}_{5858} = R^{\bf \g_{r, \th}}_{6767} = R^{\bf \g_{r, \th}}_{6868} = \frac{1}{v^2}  \\
	&R^{\bf \g_{r, \th}}_{1234} = 2 R^{\bf \g_{r, \th}}_{1324} = -2 R^{\bf \g_{r, \th}}_{1423} = - \frac{2}{h^2}  \hskip 40pt  R^{\bf \g_{r, \th}}_{5678} = 2 R^{\bf \g_{r, \th}}_{5768} = -2 R^{\bf \g_{r, \th}}_{5867} = \frac{2}{v^2}.
	\end{align*}
In particular, note that mixed terms of the form $R^{\bf \g_{r, \th}}_{1256}$ are $0$.

\subsection{Curvature formulas for ${\bf \g_r}$}
Formulas \eqref{eqn:Bel 1} through \eqref{eqn:Bel 4} allow us to compute the $(4,0)$ curvature tensor $R_{\g}$ in terms of $R_{\g_r}$.  
We use a very different approach from Section \ref{section:chn/hn} to compute the nonzero components of $R_{\g_r}$.
The background for our current computations can be found in \cite{Besse} pg.\ 235-242.  
The metric $\g_r$ is a Riemannian submersion metric with (horizontal) base $\g_{r, \th}$ and (vertical) fiber $\frac{1}{4} v_r^2 d \th^2$.  
So our approach is to compute the A and T tensors of $\g_r$, and to then use Theorem 9.28 in \cite{Besse} to compute the components of $R_{\g_r}$.  

First, the T tensor is identically zero since the vertical $\S^1$-fibers are totally geodesic.  
The argument for this is identical to that in Section 6 of \cite{Belegradek complex}.

We now compute the $A$ tensor associated with $\g_r$.
By (\cite{Besse} Prop.\ 9.24) we have that
	\begin{equation*}
	A_{X_1}X_2 = \frac{1}{2} \mathscr{V} [X_1,X_2] =  X_9.
	\end{equation*}
Analogously, $A_{X_3}X_4 = A_{X_5}X_6 = A_{X_7}X_8 =  X_9$ and $A_{X_i}X_j = 0$ if $\{ i, j \} \nin \{ \{1, 2 \}, \{ 3, 4 \}, \{5, 6\}, \{7, 8 \} \}$.  
Also, by (\cite{Besse} eqn.\ 9.21a) we have $A_{X_9}X_i = 0$ for $1 \leq i \leq 8$.  

Now, by (\cite{Besse} eqn.\ 9.21d) we see that
	\begin{equation*}
	\la A_{X_1}X_9, X_2 \ra_{\g_r} = - \la A_{X_1}X_2, X_9 \ra_{\g_r} = - \frac{1}{4} v_r^2.
	\end{equation*}
By this same equation we know that there are no other nonzero components of $A_{X_1}X_9$.  
Therefore, $A_{X_1}X_9 = - \frac{1}{4} \frac{v_r^2}{h^2} X_2$.  
Analogously, we have that
	\begin{align*}
	&A_{X_2}X_9 = \frac{1}{4} \frac{v_r^2}{h^2} X_1	\hskip 25pt	A_{X_3}X_9 = - \frac{1}{4} \frac{v_r^2}{h^2} X_4	\hskip 25pt	A_{X_4}X_9 = \frac{1}{4} \frac{v_r^2}{h^2} X_3  \\
	A_{X_5}X_9 = &- \frac{1}{4} \frac{v_r^2}{v^2} X_6	\hskip 15pt	A_{X_6}X_9 = \frac{1}{4} \frac{v_r^2}{v^2} X_5 	\hskip 15pt	A_{X_7}X_9 = - \frac{1}{4} \frac{v_r^2}{v^2} X_8	\hskip 15pt	A_{X_8}X_9 = \frac{1}{4} \frac{v_r^2}{v^2} X_7	
	\end{align*}
	
We are now ready to use Theorem 9.28 from \cite{Besse} to compute the nonzero components of $R_{\g_r}$.  
By (9.28c) we have that
	\begin{equation*}
	\la R_{\g_r}(X_1,X_9)X_1,X_9 \ra_{\g_r} = \la A_{X_1}X_9,A_{X_1}X_9 \ra_{\g_r} = \frac{1}{16} \frac{v_r^4}{h^2}
	\end{equation*}
and thus
	\begin{equation}\label{eqn:first eqn}
	R^{\bf \g_r}_{1919} = \frac{4}{h^2 v_r^2} \la R_{\g_r}(X_1,X_9)X_1,X_9 \ra_{\g_r} = \frac{v_r^2}{4 h^4}.
	\end{equation}
Identically, $R^{\bf \g_r}_{2929} = R^{\bf \g_r}_{3939} = R^{\bf \g_r}_{4949} = \frac{v_r^2}{4h^4}$.  
Also, a completely analogous computation shows that 
	\begin{equation}
	R^{\bf \g_r}_{5959} = R^{\bf \g_r}_{6969} = R^{\bf \g_r}_{7979} = R^{\bf \g_r}_{8989} = \frac{v_r^2}{4v^4}.
	\end{equation}  
	
By (9.28f) we have that
	\begin{align*}
	\la R_{\g_r}(X_1,X_2)X_1,X_2 \ra_{\g_r} &= \la R_{\g_{r, \th}}(X_1,X_2)X_1,X_2 \ra_{\g_{r, \th}} - 3 \la A_{X_1}X_2,A_{X_1}X_2 \ra_{\g_r}  \\
	&= \la R_{\g_{r, \th}}(X_1,X_2)X_1,X_2 \ra_{\g_{r, \th}} - \frac{3}{4} v_r^2
	\end{align*}
and thus
	\begin{equation}
	R^{\g_r}_{1212} = R^{\g_{r, \th}}_{1212} - \frac{3 v_r^2}{4h^4} = - \frac{4}{h^2} - \frac{3v_r^2}{4h^4} = R^{\g_r}_{3434}.
	\end{equation}
An identical argument shows that
	\begin{align}
	R^{\g_r}_{5656} = R^{\g_r}_{7878} = \frac{4}{v^2} - \frac{3v_r^2}{4v^4}.
	\end{align}
Since $A_{X_i}X_j = 0$ if $\{ i, j \} \nin \{ \{1, 2 \}, \{ 3, 4 \}, \{5, 6\}, \{7, 8 \} \}$ the above argument also provides
	\begin{equation}
	R^{\g_r}_{ijij} = R^{\g_{r, \th}}_{ijij}  \hskip 30pt \text{ if } \hskip 10pt  \{ i, j \} \nin \{ \{1, 2 \}, \{ 3, 4 \}, \{5, 6\}, \{7, 8 \} \}.
	\end{equation}
	
We now compute the mixed terms of $R_{\g_r}$.  
	\begin{align*}
	\la R_{\g_r}(X_1,X_2)X_3,X_4 \ra_{\g_r} &= \la R_{\g_{r, \th}}(X_1,X_2)X_3,X_4 \ra_{\g_{r, \th}} - 2 \la A_{X_1}X_2,A_{X_3}X_4 \ra_{\g_r}  \\
	&= \la R_{\g_{r, \th}}(X_1,X_2)X_3,X_4 \ra_{\g_{r, \th}} - \frac{1}{2} v_r^2
	\end{align*}
and therefore
	\begin{equation}
	R^{\g_r}_{1234} = R^{\g_{r, \th}}_{1234} - \frac{v_r^2}{2h^4} = - \frac{2}{h^2} - \frac{v_r^2}{2h^4} = 2 R^{\g_r}_{1324} = -2 R^{\g_r}_{1423}.
	\end{equation}
Identically
	\begin{equation}
	R^{\g_r}_{5678} = R^{\g_{r, \th}}_{5678} - \frac{v_r^2}{2v^4} = \frac{2}{v^2} - \frac{v_r^2}{2v^4} = 2 R^{\g_r}_{5768} = -2 R^{\g_r}_{5867}.
	\end{equation}
Now
	\begin{align*}
	\la R_{\g_r}(X_1,X_2)X_5,X_6 \ra_{\g_r} &= \la R_{\g_{r, \th}}(X_1,X_2)X_5,X_6 \ra_{\g_{r, \th}} - 2 \la A_{X_1}X_2,A_{X_5}X_6 \ra_{\g_r}  = - \frac{1}{2} v_r^2
	\end{align*}
and hence
	\begin{equation}
	R^{\g_r}_{1256} = - \frac{v_r^2}{2 h^2 v^2} = R^{\g_r}_{1278} = R^{\g_r}_{3456} = R^{\g_r}_{3478}.
	\end{equation}
Finally, the same argument yields
	\begin{align}
	&R^{\g_r}_{1526} = R^{\g_r}_{1728} = R^{\g_r}_{3546} = R^{\g_r}_{3748} = - \frac{v_r^2}{4h^4 v^4}  \\
	&R^{\g_r}_{1625} = R^{\g_r}_{1827} = R^{\g_r}_{3645} = R^{\g_r}_{3847} = \frac{v_r^2}{4h^4 v^4}.  \label{eqn:last eqn}
	\end{align}

\subsection{Curvature formulas for $\g$}	
Combining equations \eqref{eqn:first eqn} through \eqref{eqn:last eqn} with formulas \eqref{eqn:Bel 1} through \eqref{eqn:Bel 4} proves the following Theorem.

\vskip 20pt

\begin{theorem}\label{thm:curvature tensor 2}
In terms of the basis given in equation \eqref{eqn:orthonormal basis 2}, the only independent nonzero components of the $(4,0)$ curvature tensor $R_{\g}$ are given by the following formulas, where $i \in \{ 1, 2, 3, 4 \}$, $k \in \{ 5, 6, 7, 8 \}$, $( i, j) \in \{ (1, 2), ( 3, 4) \}$, and $(k, l) \in \{ (5, 6), ( 7, 8) \}$.

	\begin{align*}
	&R^{\g}_{1212} = R^{\g}_{3434} = - \left( \frac{h'}{h} \right)^2 - \frac{4}{h^2} - \frac{3 v_r^2}{4h^4}  \hskip 20pt R^{\g}_{5656} = R^{\g}_{7878} = - \left( \frac{v'}{v} \right)^2 + \frac{4}{v^2} - \frac{3v_r^2}{4 v^4}  \\
	&R^{\g}_{i9i9} = - \frac{h' v_r'}{h v_r} + \frac{v_r^2}{4 h^4}   \hskip 40pt	R^{\g}_{k9k9} = - \frac{v' v_r'}{v v_r} + \frac{v_r^2}{4 v^4}	\hskip 40pt	R^{\g}_{ikik} = - \frac{h' v'}{hv}   \\
	&R^{\g}_{1313} = R^{\g}_{1414} = R^{\g}_{2323} = R^{\g}_{2424} = - \left( \frac{h'}{h} \right)^2 - \frac{1}{h^2}  \hskip 40pt	\text{(continued on next page)}
	\end{align*}
	\begin{align*}
	&R^{\g}_{5757} = R^{\g}_{5858} = R^{\g}_{6767} = R^{\g}_{6868} = - \left( \frac{v'}{v} \right)^2 + \frac{1}{v^2}  \\
	&R^{\g}_{i,10,i,10} = - \frac{h''}{h}  \hskip 60pt	R^{\g}_{k,10,k,10} = - \frac{v''}{v} 	\hskip 60pt R^{\g}_{9,10,9,10} = - \frac{v_r''}{v_r}   \\
	&R^{\g}_{1234} = 2R^{\g}_{1324} = -2R^{\g}_{1423} = - \frac{2}{h^2} - \frac{v_r^2}{2h^4}  \\
	&R^{\g}_{5678} = 2R^{\g}_{5768} = -2R^{\g}_{5867} = \frac{2}{v^2} - \frac{v_r^2}{2v^4}  \\
	&R^{\g}_{ijkl} = 2R^{\g}_{ikjl} = -2R^{\g}_{iljk} = - \frac{v_r^2}{2 h^2 v^2}   \\
	&R^{\g}_{i,j,9, 10} = 2R^{\g}_{i,9,j,10} = -2R^{\g}_{i,10,j,9} = - \frac{v_r}{h^2} \left( \ln \frac{v_r}{h} \right)'   \\
	&R^{\g}_{k,l,9,10} = 2R^{\g}_{k,9,l,10} = -2R^{\g}_{k,10,l,9} = - \frac{v_r}{v^2} \left( \ln \frac{v_r}{v} \right)' .
	\end{align*}
\end{theorem}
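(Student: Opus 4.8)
The plan is to prove Theorem \ref{thm:curvature tensor 2} by feeding the already-computed curvature tensor $R_{\g_r}$ of the level-surface metric (equations \eqref{eqn:first eqn} through \eqref{eqn:last eqn}, obtained from the O'Neill formulas for the Riemannian submersion with base $\g_{r,\th}$ and circle fiber $\frac14 v_r^2 d\th^2$) into the general warped-product formulas \eqref{eqn:Bel 1} through \eqref{eqn:Bel 4}. Since $\g = \g_r + dr^2$, these formulas express each component $R^\g_{ijkl}$ in terms of $R^{\g_r}_{ijkl}$ plus a correction recording the $r$-dependence of the coefficient functions $f_i \in \{ h, v, \tfrac12 v_r \}$ attached to the directions $Y_i$, together with the Lie-bracket data of the frame $\{Y_i\}$. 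I would organize the entire computation according to how many of the four indices equal $10$, that is, how many slots are occupied by the radial field $Y_{10} = \ddr$.

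First I would dispatch every component with no index equal to $10$. Here \eqref{eqn:Bel 1} is the Gauss equation for the hypersurface $E(r)$, whose shape operator in the frame $\{Y_i\}$ is diagonal with eigenvalue $(\ln f_i)'$ in the $Y_i$-direction. Exactly as in Theorem \ref{thm:curvature tensor 0}, a sectional component $R^\g_{ijij}$ then acquires the correction $-(\ln f_i)'(\ln f_j)'$, producing $R^\g_{1212} = R^{\g_r}_{1212} - (h'/h)^2$, $R^\g_{i9i9} = R^{\g_r}_{i9i9} - (h'/h)(v_r'/v_r)$, $R^\g_{ikik} = -(h'/h)(v'/v)$, and so on; whereas for a genuinely mixed component such as $R^\g_{1234}$ or $R^\g_{1256}$ every off-diagonal entry of the shape operator vanishes, so the correction is zero and the component equals $R^{\g_r}_{ijkl}$ unchanged. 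Combining with the recorded values of $R^{\g_r}$ yields all the sectional formulas not involving $Y_{10}$ together with the purely horizontal mixed blocks $R^\g_{1234}$, $R^\g_{5678}$, and the cross block $R^\g_{ijkl}$.

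Next I would treat the components with exactly one index equal to $10$. The radial sectional curvatures $R^\g_{i,10,i,10}$ follow from the radial case of \eqref{eqn:Bel 1} through \eqref{eqn:Bel 4} (the analog of $K_\l(Y_a,Y_n) = -h''/h$ in Section \ref{section:hn/hk}), giving directly $-h''/h$, $-v''/v$, and $-v_r''/v_r$. The remaining mixed radial terms $R^\g_{i,j,9,10}$ and $R^\g_{k,l,9,10}$ are Codazzi-type and come from \eqref{eqn:Bel 4}; these are precisely the quantities already evaluated in the proof of Lemma \ref{lemma:structure constants}, only now carried out for the warped metric. One computes $\la [Y_1,Y_2],Y_9 \ra_\g = v_r/h^2$ (respectively $v_r/v^2$) and pairs it with the logarithmic-derivative factor $\left( \ln \left[ \frac{v_r^2}{4h^2} \right] \right)' = 2\left( \ln \frac{v_r}{h} \right)'$; the antisymmetry $R^\g_{i,j,9,10} = -R^\g_{10,9,i,j}$ then delivers the final two blocks.

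The hard part will be the mixed radial block $R^\g_{i,j,9,10}$. In contrast to the real hyperbolic case of Theorem \ref{thm:curvature tensor 0}, the frame $\{X_i\}$ here has nonvanishing Lie brackets, and these curvature terms are not rescalings of any component of $R_{\g_r}$ — they are genuinely generated by the warping interacting with the structure constants $c_{ij} = 2$ of Lemma \ref{lemma:structure constants}. The care required is to recompute the brackets $[Y_i,Y_j]$ for the warped coefficients $h$, $v$, $\tfrac12 v_r$ rather than the hyperbolic values, and to confirm that no spurious mixed radial terms survive beyond those listed: a term such as $R^\g_{1,2,5,10}$ vanishes because $\la [Y_1,Y_2],Y_5 \ra_\g = 0$, the relevant structure constant being zero, and the shape operator being diagonal. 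I would close with the consistency check of setting $h = \cosh r$, $v = \sinh r$, $v_r = \sinh 2r$ and verifying that every formula collapses to the constant values \eqref{eqn:c4 tensor 1} through \eqref{eqn:c4 tensor 5} of $R_{\bf c_5}$.
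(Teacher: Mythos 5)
Your proposal is correct and follows essentially the same route as the paper, whose entire proof is precisely the step you describe: substituting the components of $R_{\g_r}$ from equations \eqref{eqn:first eqn}--\eqref{eqn:last eqn} (obtained via the O'Neill/Besse submersion computation and Lemma \ref{lemma:structure constants}) into the warped-product formulas \eqref{eqn:Bel 1}--\eqref{eqn:Bel 4}, with the Lie brackets $[Y_i,Y_j]$ rescaled by the warping functions exactly as you indicate. The only slip is cosmetic: components such as $R^{\g}_{i,10,i,10}$ have two indices equal to $10$, not one, but this misclassification does not affect any of your computations.
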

	
\vskip 10pt
	
Unlike Section \ref{section:chn/hn}, this time is is a much simpler exercise in hyperbolic trigonometric identities to check that, when $h = \cosh(r)$, $v = \sinh(r)$, and $v_r = \sinh(2r)$, the above formulas reduce to the constants in equations \eqref{eqn:c4 tensor 1} through \eqref{eqn:c4 tensor 5}. 

Finally, notice that the above curvature formulas contain all of the formulas that arise in the analogous $\C \H^n \setminus \C \H^k$ for general $n$ and $k$.  
In general, one can write the complex hyperbolic metric ${\bf c_n}$ as
	\begin{equation*}
	{\bf c_n} = \cosh^2(r) {\bf c_k} + \sinh^2(r) {\bf p_{n-k-1}} + \frac{1}{4} \sinh^2(2r) d \th^2 + dr^2
	\end{equation*}
and the corresponding warped-product metric as
	\begin{equation*}
	{\bf \g_n} = h^2(r) {\bf c_k} + v^2(r) {\bf p_{n-k-1}} + \frac{1}{4} v_r^2(r) d \th^2 + dr^2
	\end{equation*}
where ${\bf p_{n-k-1}}$ is the complex projective metric on $\C \P^{n-k-1}$.  
The curvature formulas for the base $\C \H^k$ are encoded in the formulas for $R^{\g}_{1212}, R^{\g}_{1313}, $ and $R^{\g}_{1234}$.  
The analogous formulas for $\C \P^{n-k-1}$ are contained in $R^{\g}_{5656}, R^{\g}_{5757}, $ and $R^{\g}_{5678}$.
Adding in the curvature formulas above of the form $R^{\g}_{ijij}$ and $R^{\g}_{ijkl}$ gives all curvature formulas for $h^2(r) {\bf c_k} + v^2(r) {\bf p_{n-k-1}}$.  
And then all of the formulas above containing either a ``9" or a ``10" give the rest of the curvature formulas for ${\bf \g_n }$.

\subsection{The exceptional case $\C \H^n \setminus \C \H^{n-2}$}\label{subsection:chn/chn-2}
Notice that, when $k = n-2$, there are no sectional curvatures of $\g$ of the form
	\begin{equation*}
	- \left( \frac{v'}{v} \right)^2 + \frac{1}{v}^2.
	\end{equation*}
That is because we can write $\C \H^n \setminus \C \H^{n-2} \cong \C \H^{n-2} \times \S^3 \times (0,\infty)$, and $\C \P^1 $ (the base of the Hopf fibration) has constant holomorphic curvature $4$.
So the purpose of this subsection is to prove the following:

\begin{lemma}
There do not exist functions $h, v, $ and $v_r$ that, when inserted into equation \eqref{eqn:/g}, yield a complete finite volume Riemannian metric on $\C \H^n \setminus \C \H^{n-2}$ with nonpositive sectional curvature.
\end{lemma}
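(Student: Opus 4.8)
The plan is to specialize Theorem~\ref{thm:curvature tensor 2} to $k=n-2$ and extract a contradiction from a small number of coordinate sectional curvatures. When $k=n-2$ the fiber base is $\C\P^{n-k-1}=\C\P^1$, which is two (real) dimensional and carries constant holomorphic curvature, so among the directions $Y_5,\dots,Y_8$ only the holomorphic pair $Y_5,Y_6$ survives. In particular there is no plane of the form $\{Y_5,Y_7\}$, and the sectional curvature $R^{\g}_{5757}=-\left(\frac{v'}{v}\right)^2+\frac{1}{v^2}$ -- the term of the form \eqref{eqn:failure} responsible for the obstruction in every other higher-codimension case -- is simply absent. This is precisely why the present case is exceptional, and the argument must instead use the surviving holomorphic and Hopf curvatures.

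Reading off the remaining relevant components from Theorem~\ref{thm:curvature tensor 2}, nonpositivity of sectional curvature requires, first, that $v$ and $v_r$ be convex, since
\[
R^{\g}_{5,10,5,10}=-\frac{v''}{v}\le 0,\qquad R^{\g}_{9,10,9,10}=-\frac{v_r''}{v_r}\le 0 ,
\]
and, second, that
\[
R^{\g}_{5656}=-\Big(\tfrac{v'}{v}\Big)^2+\frac{4}{v^2}-\frac{3v_r^2}{4v^4}\le 0,\qquad
R^{\g}_{5959}=-\frac{v'v_r'}{vv_r}+\frac{v_r^2}{4v^4}\le 0 .
\]
I would rewrite the first inequality as $(v')^2+\tfrac{3}{4}(v_r/v)^2\ge 4$ and, after multiplying by $vv_r>0$, the second as $v'v_r'\ge\tfrac14 (v_r/v)^3$.

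Next I would bring in completeness and finite volume. These force the metric \eqref{eqn:/g} to have a cusp as $r\to-\infty$ along which the normal sphere linking $N$ collapses, so that $v,v_r\to0$ and $\int_{-\infty}h^{2(n-2)}v^2v_r\,dr<\infty$. Because $v$ and $v_r$ are positive and convex with limit $0$, each is eventually increasing with integrable derivative, and hence $v'\to0$ and $v_r'\to0$ as $r\to-\infty$. Feeding $v'\to0$ into $(v')^2+\tfrac34(v_r/v)^2\ge4$ gives $v_r/v>2$ for all sufficiently negative $r$; substituting this into $v'v_r'\ge\tfrac14(v_r/v)^3$ yields $v'v_r'>2$ near the cusp. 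This contradicts $v'v_r'\to0$, and the lemma follows.

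The main obstacle is the passage from the hypotheses ``complete'' and ``finite volume'' to the asymptotics used above, namely that $v,v_r\to0$ with $v',v_r'\to0$. Convexity makes each warping function monotone near the cusp, so the scenarios to rule out are those in which $v$ or $v_r$ fails to decay. A positive limit is excluded directly: if $v\to v_\infty>0$ then $R^{\g}_{5959}\le0$ forces $v_r\to0$, after which $R^{\g}_{5656}\to 4/v_\infty^2>0$, a contradiction. An expanding end is the delicate remaining possibility: if $v\to+\infty$ while $v_r$ is increasing then $v'<0$ and $v_r'>0$ make $R^{\g}_{5959}>0$, and the doubly-expanding regime must be eliminated using $\int_{-\infty}h^{2(n-2)}v^2v_r\,dr<\infty$ together with the base curvatures $R^{\g}_{1212}$ and $R^{\g}_{i9i9}$; this case analysis is the technical heart of the proof. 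Conceptually, every one of these contradictions is a shadow of the fact that, when $N$ has real codimension four, $M\setminus N$ is not aspherical, so the Cartan--Hadamard theorem already forbids any complete nonpositively curved metric; the computation above simply exhibits this obstruction directly in the curvature formulas of Theorem~\ref{thm:curvature tensor 2}, as anticipated in the discussion surrounding \eqref{eqn:failure}.
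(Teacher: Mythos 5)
Your proposal is correct and follows essentially the same route as the paper: both extract the two inequalities $R^{\g}_{5656}\le 0$ and $R^{\g}_{k9k9}\le 0$ from Theorem \ref{thm:curvature tensor 2}, combine them with the cusp asymptotics $v',v_r'\to 0$ as $r\to-\infty$, and derive incompatible requirements on $v_r/v$ (the paper phrases the contradiction as $\lim v_r/v>1$ versus $\lim v_r/v=0$, while you phrase it as $v'v_r'>2$ versus $v'v_r'\to 0$, a trivial reorganization). The decay conditions you worry about justifying in your final paragraph are simply asserted in the paper as necessary for completeness, finite volume, and nonpositive curvature, so your argument is at least as careful as the paper's own.
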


\begin{proof}
Since we are removing a copy of $\C \H^{n-2}$ from $\C \H^n$, for the metric to be complete we need to define $h, v, $ and $v_r$ on $(- \infty, \infty)$.  
These functions need to be positive for the metric to be Riemannian, and they need to be non-decreasing for there to be any chance of nonpositive curvature.  
For any hope of finite volume, we need all of the following limits to hold:
	\begin{equation*}
	\lim_{r \to - \infty} h, h', v, v', v_r, v_r' = 0.
	\end{equation*}
	
Now, from the formula for the $R_{5656}$ term in Theorem \ref{thm:curvature tensor 2} we must have that
	\begin{align}
	\frac{4-(v')^2}{v^2} - \frac{3v_r^2}{4v^4} &\leq 0  \label{eqn:neg curv 1}  \\
	\iff	\hskip 30pt	\frac{16 - 4(v')^2}{3} &\leq \left( \frac{v_r}{v} \right)^2.  \nonumber
	\end{align}
In particular, we see that a necessary requirement for nonpositive curvature is that
	\begin{equation}\label{eqn:contradiction 1}
	\lim_{r \to - \infty} \frac{v_r}{v} > 1.
	\end{equation}
	
From the formula for the $R_{k9k9}$ term in Theorem \ref{thm:curvature tensor 2} we must have that
	\begin{align}
	- \frac{v' v_r'}{v v_r} + \frac{v_r^2}{4v^4} &\leq 0  \nonumber  \\
	\Longrightarrow	\hskip 30pt	\frac{3v_r^2}{4v^4} &\leq \frac{3 v' v_r'}{v v_r}.  \label{eqn:neg curv 2}
	\end{align}
Comparing equations \eqref{eqn:neg curv 1} and \eqref{eqn:neg curv 2}, we see that
	\begin{equation*}
	\frac{4 - (v')^2}{v^2} \leq \frac{3 v' v_r'}{v v_r}	\hskip 20pt	\Longrightarrow		\hskip 20pt	4 - (v')^2 \leq (3v'v_r') \cdot \frac{v}{v_r}
	\end{equation*}
is also a necessary requirement for nonpositive curvature.  
But as $r \to - \infty$, we know that $4 - (v')^2 \to 4$ and $3 v' v_r' \to 0$.  
Thus, we must have that
	\begin{equation}\label{eqn:contradiction 2}
	\lim_{r \to - \infty} \frac{v}{v_r} = \infty	\hskip 20pt	\Longrightarrow		\hskip 20pt	\lim_{r \to - \infty} \frac{v_r}{v} = 0.
	\end{equation}
Equations \eqref{eqn:contradiction 1} and \eqref{eqn:contradiction 2} provide a contradiction, proving the Lemma.
	
\end{proof}

\vskip 20pt 

\section{Preliminaries}\label{section:preliminaries}

\subsection{Formula for the curvature tensor of $\C \H^n$ in terms of the complex structure $J$}
The components of the (4,0) curvature tensor of the complex hyperbolic metric $g$ can be expressed in terms of $g$ and the complex structure $J$.  
The following formula can be found in \cite{KN} or in Section 5 of \cite{Belegradek complex} (recall Remark \ref{rmk:curvature notation} from the Introduction).  
In this formula $X, Y, Z$, and $W$ are arbitrary vector fields.
	\begin{align}
	 \la R_g&(X,Y)Z,W \ra_g = \la X,W \ra_g \la Y,Z \ra_g - \la X,Z \ra_g \la Y,W \ra_g  \label{eqn:J curvature formula}\\
	&+  \la X,JW \ra_g \la Y,JZ \ra_g - \la X,JZ \ra_g \la Y,JW \ra_g + 2 \la X,JY \ra_g \la W,JZ \ra_g.  \nonumber
	\end{align}

\subsection{Koszul's formula for the Levi-Civita connection}
Let $X, Y,$ and $Z$ denote vector fields on a Riemannian manifold $(M,g)$.
The following is the well-known ``Koszul formula" for the values of the Levi-Civita connection $\nabla$ (which can be found on pg.\ 55 of \cite{do Carmo})
	\begin{align}
	\la \nabla_Y X, Z \ra_g = & \, \frac{1}{2} ( X \la Y,Z \ra_g + Y \la Z,X \ra_g - Z \la X,Y \ra_g  \label{eqn:Koszul formula}  \\
	&- \la [X,Z],Y \ra_g - \la [Y,Z], X \ra_g - \la [X,Y],Z \ra_g ) . \nonumber
	\end{align}
In this paper we will usually be considering an orthonormal frame $(Y_i )$.
In this setting we know that $\la Y_i , Y_j \ra_g = \d_{ij}$, where $\d_{ij}$ denotes Kronecker's delta.
Therefore the first three terms on the right hand side of formula \eqref{eqn:Koszul formula} are all zero.  
Thus, in an orthonormal frame, formula \eqref{eqn:Koszul formula} reduces to
	\begin{equation}
	\la \nabla_Y X, Z \ra_g = - \frac{1}{2} \left( \la [X,Z],Y \ra_g + \la [Y,Z], X \ra_g + \la [X,Y],Z \ra_g \right).  \label{eqn:connection}
	\end{equation}

\subsection{General curvature formulas for warped product metrics}\label{subsection:product formulas}
The curvature formulas below, which were worked out by Belegradek in \cite{Belegradek real} and stated in Appendix B of \cite{Belegradek complex}, apply to metrics of the form $g = g_r + dr^2$ on manifolds of the form $E \times I $ where $I$ is an open interval and $E$ is a manifold.  
The formulas are true provided that for each point $q \in E$ there exists a local frame $\{ X_i \}$ on a neighborhood $U_q$ in $E$ which is $g_r$-orthogonal for each $r$.  
Such a family of metrics $(E, g_r)$ is called {\it simultaneously diagonalizable}.  
Let
	\begin{equation*}
	h_i(r) := \sqrt{g_r(X_i,X_i)}.
	\end{equation*}
Then the local frame $\{ Y_i \}$ defined by
	\begin{equation*}
	Y_i = \frac{1}{h_i} X_i
	\end{equation*}
is a $g_r$-orthonormal frame on $U_q$ for any value of $r$.  
We then have the following formulas for the (4,0) curvature tensor $R_g$ in terms of the (4,0) curvature tensor $R_{g_r}$, the collection $\{ h_i \}$, and the Lie brackets $[Y_i,Y_j]$.  
Note that $\la , \ra$ is used to denote the metric $g$ and $\dr = \ddr$.
	\begin{align}
	\la R_g(Y_i,Y_j)Y_i, Y_j \ra &= \la R_{g_r} (Y_i,Y_j)Y_i,Y_j \ra - \frac{h_i' h_j'}{h_i h_j}  \label{eqn:Bel 1}  \\
	\la R_g(Y_i,Y_j)Y_k, Y_l \ra &= \la R_{g_r} (Y_i, Y_j)Y_k, Y_l \ra  \hskip 10pt \text{if } \{i, j \} \neq \{ k, l \}  \label{eqn:Bel 2}  \\
	\la R_g (Y_i, \dr) Y_i, \dr \ra &= - \frac{h_i''}{h_i}  \hskip 40pt \la R_g(Y_i, \dr)Y_j, \dr \ra = 0 \hskip 10pt \text{if } i \neq j  \label{eqn:Bel 3}  \\
	2\la R \left( \dr, Y_i \right) Y_j, Y_k \ra &=  \la [Y_i,Y_k],Y_j \ra \left( \ln \frac{h_j}{h_k} \right)' + \la [Y_j,Y_i],Y_k \ra \left( \ln \frac{h_k}{h_j} \right)'  \label{eqn:Bel 4}  \\
	&+ \la [Y_j,Y_k],Y_i \ra \left( \ln \frac{h_i^2}{h_j h_k} \right)'   \nonumber .
	\end{align}

\subsection{The Nijenhuis Tensor}
In Sections \ref{section:chn/hn} and \ref{section:chn/chk} we will be explicitly dealing with $\C \H^n$.  
Since the almost complex structure on $\C \H^n$ is integrable, we have that the {\em Nijenhuis Tensor} is identically equal to zero.  
Explicitly, for any vector fields $X$ and $Y$ on $\C \H^n$, we have that
	\begin{equation}\label{eqn:Nijenhuis tensor}
	0 = [X,Y] + J[JX,Y] + J[X,JY] - [JX,JY]
	\end{equation}
where $J$ denotes the complex structure on $\C \H^n$.

\vskip 20pt

\section{Computations for the Lie brackets for $\C \H^3 \setminus \H^3$}\label{section:proof of Lie brackets 1}

\vskip 10pt

The whole purpose of this Section is to prove Theorem \ref{thm:Lie brackets 1}.

\begin{proof}[Proof of Theorem \ref{thm:Lie brackets 1}]
There are $5 \times 10 = 50$ structure constants to compute from equation \eqref{eqn:structure constants 1}.  
From equation \eqref{eqn:first Lie bracket 1} we know that $c_{45}^4 = 0$ and $c_{45}^5 = - \cot(\th)$, leaving 48 unknown structure constants.

We can combine formula \eqref{eqn:Bel 4} with equations \eqref{eqn:c3 tensor 1} through \eqref{eqn:c3 tensor 4} to compute many of the constants.  
As a first example, note that
	\begin{align*}
	0 &= 2R^{\bf c_3}_{6131} = 0 + 2 \la [Y_3,Y_1],Y_1 \ra_{\bf c_3} \left( \ln \frac{h}{h_r} \right)'  \\
	&= - \frac{2 c_{13}^1}{\cosh(2r)} \left( \ln \frac{\cosh(r)}{\cosh(2r)} \right)'  \\
	\end{align*}
and thus $c_{13}^1 = 0$.  
We can analogously show
	\begin{align*}
	0 &= c_{14}^1 = c_{15}^1 = c_{23}^2 = c_{24}^2 = c_{25}^2 = c_{13}^3 = c_{23}^3  \\
	&= c_{34}^3 = c_{35}^3 = c_{14}^4 = c_{24}^4 = c_{34}^4 = c_{15}^5 = c_{25}^5 = c_{35}^5.
	\end{align*}
This narrows us down to $32$ unknown constants.  

Continuing with the same formula and equations, we have that
	\begin{align*}
	0 &= 2R^{\bf c_3}_{6145} = 0  + 0 + \la [Y_4,Y_5],Y_1 \ra_{\bf c_3} \left( \ln \frac{\cosh^2(r)}{\sinh^2(r)} \right)'  \\
	&= \frac{c_{45}^1 h}{v^2} \left( \ln \frac{\cosh^2(r)}{\sinh^2(r)} \right)'
	\end{align*}
and therefore $c_{45}^1 = 0$.  
Analogously, $c_{45}^2 = c_{45}^3 = c_{12}^3 = c_{12}^4 = c_{12}^5 = 0$.  
This reduces us to $26$ unknowns.  
But we can also use the same curvature formulas here, but with the indices permuted, to derive some simple equations relating some of the constants.  
For example,
	\begin{align*}
	0 &= R^{\bf c_3}_{6415} = 0 + \la [Y_1,Y_4],Y_5 \ra_{\bf c_3} \left( \ln \frac{\sinh(r)}{\cosh(r)} \right)' + \la [Y_1,Y_5],Y_4 \ra_{\bf c_3} \left( \ln \frac{\sinh(r)}{\cosh(r)} \right)'  \\
	&= \frac{1}{h} (c_{14}^5 + c_{15}^4) \left( \ln \frac{\sinh(r)}{\cosh(r)} \right)'
	\end{align*}
and thus $c_{14}^5 = - c_{15}^4$.  
Analogously, we have the identities
	\begin{align*}
	c_{24}^5 = -c_{25}^4	&	\hskip 40pt	c_{34}^5 = - c_{35}^4	\hskip 40pt	c_{13}^2 = -c_{23}^1  \\
	&c_{14}^2 = - c_{24}^1	\hskip 40pt	c_{15}^2 = - c_{25}^1.
	\end{align*}
	
Combining formula \eqref{eqn:Bel 4} with the fact that $2R^{\bf c_3}_{6413} = 2$ gives that
	\begin{align*}
	2 &= \la [Y_4,Y_3],Y_1 \ra_{\bf c_3} \left( \ln \frac{h}{h_r} \right)' + \la [Y_1,Y_4],Y_3 \ra_{\bf c_3} \left( \ln \frac{h_r}{h} \right)' + \la [Y_1,Y_3],Y_4 \ra_{\bf c_3} \left( \ln \frac{v^2}{hh_r} \right)'  \\
	&= - \frac{c_{34}^1 h}{h_r v} \left( \ln \frac{h}{h_r} \right)' + \frac{c_{14}^3 h_r}{hv} \left( \ln \frac{h_r}{h} \right)' + \frac{c_{13}^4 v}{hh_r} \left( \ln \frac{v^2}{hh_r} \right)'  \\
	&= \left( \frac{-c_{34}^1 \cosh(r)}{\cosh(2r) \sinh(r)} - \frac{c_{14}^3 \cosh(2r)}{\cosh(r) \sinh(r)} \right) \left( \tanh(r) - 2 \tanh(2r) \right)  \\
	&+ \frac{c_{13}^4 \sinh(r)}{\cosh(r) \cosh(2r)} \left( 2 \coth(r) - \tanh(r) - 2 \tanh(2r) \right).
	\end{align*}
Now one consults equation (5.9) in \cite{Min} to see that the solutions to this equation are
	\begin{equation*}
	c_{13}^4 = 1	\hskip 40pt	c_{14}^3 = 1	\hskip 40pt	c_{34}^1 = -1.
	\end{equation*}
In exactly the same manner we can use $R^{\bf c_3}_{6253}$ with equation (5.9) in \cite{Min} to compute
	\begin{equation*}
	c_{23}^5 = 1	\hskip 40pt	c_{25}^3 = 1	\hskip 40pt	c_{35}^2 = -1.
	\end{equation*}
This leaves $20$ unknowns together with the $6$ identities listed above.  
Now, using $R^{\bf c_3}_{6135}$ we have that
	\begin{align*}
	0 &= \la [Y_1,Y_5],Y_3 \ra_{\bf c_3} \left( \ln \frac{h_r}{v} \right)' + \la [Y_3,Y_1],Y_5 \ra_{\bf c_3} \left( \ln \frac{v}{h_r} \right)' + \la [Y_3,Y_5],Y_1 \ra_{\bf c_3} \left( \ln \frac{h^2}{h_r v} \right)'  \\
	&= \left( \frac{c_{15}^3 \cosh(2r)}{\cosh(r) \sinh(r)} + \frac{c_{13}^5 \sinh(r)}{\cosh(r) \cosh(2r)} \right) \left( 2 \tanh(2r) - \coth(r) \right)  \\
	&+ \frac{c_{35}^1 \cosh(r)}{\cosh(2r) \sinh(r)} \left( 2 \tanh(r) - 2\tanh(2r) - \coth(r) \right).
	\end{align*}
One can check that the only solution to this equation (which holds for all values of $r$) is $c_{13}^5 = c_{15}^3 = c_{35}^1 = 0$.  
Analogously, we can use $\R^{\bf c_3}_{6234}$ to show that $c_{23}^4 = c_{24}^3 = c_{34}^2 = 0$.  
These equations reduce us to $14$ unknowns.  

This is as much information as we can gain from formula \eqref{eqn:Bel 4}.  
So we next turn to the Nijenhuis Tensor \eqref{eqn:Nijenhuis tensor}.  
First applying this to $(Y_1,Y_2)$, we have
	\begin{align*}
	0 &= [Y_1,Y_2] - J[Y_4,Y_2] - J[Y_1,Y_5] - [Y_4,Y_5]  \\
	&= \frac{1}{h} (c_{12}^1 Y_1 + c_{12}^2 Y_2) + J \left( \frac{c_{24}^1}{v} Y_1 + \frac{c_{24}^5}{h} Y_5 \right) - J \left( \frac{c_{15}^2}{v} Y_2 + \frac{c_{15}^4}{h} Y_4 \right) + \frac{1}{v} \cot(\th) Y_5  \\
	&= \frac{1}{h} \left( c_{12}^1 - c_{15}^4 \right) Y_1 + \frac{1}{h} (c_{12}^2 + c_{24}^5) Y_2 - \frac{c_{24}^1}{v} Y_4 + \frac{1}{v} (c_{15}^2 + \cot(\th)) Y_5.
	\end{align*}
Therefore, we have that 
	\begin{equation*}
	c_{24}^1 = 0 = -c_{14}^2 \hskip 25pt c_{15}^2 = -\cot(\th) = -c_{25}^1  \hskip 25pt  c_{12}^1 = c_{15}^4 = -c_{14}^5  \hskip 25pt  c_{12}^2 = -c_{24}^5.  
	\end{equation*}
We can also apply the Nijenhuis tensor to the pairs $(Y_1,Y_3)$ and $(Y_2,Y_3)$, but these are much less productive.  
These applications only give us the pair of identities
	\begin{equation*}
	c_{13}^2 = - c_{34}^5	\hskip 50pt	c_{23}^1 = - c_{35}^4
	\end{equation*}
the former of which comes from the pair $(Y_1,Y_3)$, and the latter from the pair $(Y_2,Y_3)$.  

At this stage, we have reduced our $10$ Lie brackets as follows:
	\begin{align*}
	&[X_1,X_2] =  c_{12}^1 X_1 + c_{12}^2 X_2  		\hskip 64pt  	[X_1,X_3] =  c_{13}^2 X_2 + X_4  \\
	&[X_1,X_4] =  X_3 - c_{12}^1 X_5  	\hskip 77pt	[X_1,X_5] =  - \cot(\th) X_2 + c_{12}^1 X_4  \\
	&[X_2,X_3] =  -c_{13}^2 X_1 + X_5  	\hskip 70pt	[X_2,X_4] =  -c_{12}^2 X_5  \\
	&[X_2,X_5] =  \cot(\th) X_1 + X_3 + c_{12}^2 X_4  	\hskip 26pt	[X_3,X_4] =  -X_1 - c_{13}^2 X_5  \\
	&[X_3,X_5] =  -X_2 + c_{13}^2 X_4  	\hskip 70pt	[X_4,X_5] =  - \cot(\th) X_5.
	\end{align*}
Notice that, using the known identities, we can reduce the system to three unknowns:  $c_{12}^1, c_{12}^2$, and $c_{13}^2$.  
All that is left is to show that $c_{12}^1 = \pm 1$, $c_{12}^2 = 0$, and $c_{13}^2 = \mp \cot(\th)$.  

At this point we have exhausted all of our ``easy" options.  
The only way to obtain new relationships between the structure constants is to compute new components of $R_{\bf c_3}$.  
To do this, one needs to first use equation \eqref{eqn:Koszul formula} with the values for the Lie brackets given above to compute the Levi-Civita connection $\nabla$ compatible with ${\bf c_3}$.  
Of course, these formulas will contain the constants $c_{12}^1, c_{12}^2$, and $c_{13}^2$.  
But when the correct values for these constants are inserted, these formulas will reduce to those of Theorem \ref{thm:connection 1}.
Then once one has computed $\nabla$, they can use those values to compute the components of $R_{\bf c_3}$.  

The first component that will be useful is $R^{\bf c_3}_{1212}$:
	\begin{align*}
	-1 = &R^{\bf c_3}_{1212} = \la \nabla_{Y_2} \nabla_{Y_1} Y_1 - \nabla_{Y_1} \nabla_{Y_2} Y_1 + \nabla_{[Y_1,Y_2]} Y_1,Y_2 \ra_{\bf c_3}  \\
	= &\la \nabla_{Y_2} \left( \frac{- c_{12}^1}{\cosh(r)} Y_2 - \tanh(r) Y_6 \right) - \nabla_{Y_1} \left( \frac{-c_{12}^2}{\cosh(r)} Y_2 \right) \\
	&+ \frac{c_{12}^1}{\cosh(r)} \nabla_{Y_1}Y_1 + \frac{c_{12}^2}{\cosh(r)} \nabla_{Y_2}Y_1, Y_1 \ra_{\bf c_3}  \\
	= &- \frac{\sinh^2(r)}{\cosh^2(r)} - \frac{((c_{12}^1)^2 + (c_{12}^2)^2)}{\cosh^2(r)}  \\
	&\Longrightarrow \hskip 30pt  (c_{12}^1)^2 + (c_{12}^2)^2 = 1.
	\end{align*}
	
The next component that we use is $R^{\bf c_3}_{1512}$.  
We will skip the details and just note that
	\begin{equation*}
	0 = R^{\bf c_3}_{1512} = \frac{c_{12}^2}{\sinh(r) \cosh(r)} \cdot \cot(\th)
	\end{equation*}
which implies that $c_{12}^2 = 0$.  
Combining this with the first equation shows that $c_{12}^1 = \pm 1$.  
Finally, to compute $c_{13}^2$ we use $R^{\bf c_3}_{1412}$:
	\begin{equation*}
	0 = R^{\bf c_3}_{1412} = \frac{- c_{13}^2}{\sinh(r) \cosh(r)} - \frac{c_{12}^1}{\sinh(r) \cosh(r)} \cdot \cot(\th).
	\end{equation*}
Therefore
	\begin{equation*}
	c_{13}^2 = - (\pm 1) \cot(\th) = \mp \cot(\th).
	\end{equation*}

\end{proof}

\subsection*{Acknowledgements}  
The author would like to thank J.F. Lafont for both suggesting this problem and for many helpful discussions about the work contained in this paper.
The author would also like to thank I. Belegradek, J. Meyer, and B. Tshishiku for various comments which aided in this research.

\vskip 20pt

\end{document}